\documentclass[12pt]{amsart}
\usepackage{amsmath,amssymb,amsthm,amscd}

\def\e{\epsilon}

\def\a{{\alpha}}

\def\p{\partial}
\newcommand\R{{\mathbb R}}
\newcommand\T{{\mathbb T}}

\newcommand\C{{\mathbb C}}

\newtheorem{thm}{Theorem}[section]

\newtheorem{lem}{Lemma}[section]
\newtheorem{prop}{Proposition}[section]

\theoremstyle{definition}

\theoremstyle{remark}

\newtheorem{rem}{Remark}[section]

\oddsidemargin  0.25in

\evensidemargin 0.25in

\topmargin -0.8in

\setlength{\textwidth}{6in}

\setlength{\textheight}{8.5in}

\setlength{\topmargin}{-0.in} \setlength{\headheight}{2ex}
\setlength{\headsep}{4ex}

\begin{document}

\title[Lagrangian Mean Curvature flow for entire Lipschitz graphs II]
{Lagrangian Mean Curvature flow for entire Lipschitz graphs II}

\author{Albert CHAU}
\address{Department of Mathematics\\
University of British Columbia\\
Vancouver, B.C., V6T 1Z2\\
Canada}
\address{Department of Mathematics \\University of Washington\\Seattle, WA 98195\\U.S.A.}
\email{chau@math.ubc.ca}

\author{Jingyi CHEN}
\email{jychen@math.ubc.ca}

\author{Yu Yuan }
\email{yuan@math.washington.edu}
\thanks{2000 Mathematics Subject Classification.  Primary 53C44, 53A10.}
\thanks{The first two authors are partially supported by NSERC grants, and the third author is
partially supported by an NSF grant}
\date{\today}

\begin{abstract}
We prove longtime existence and estimates for solutions to a fully nonlinear Lagrangian parabolic equation with locally $C^{1,1}$ initial data $u_0$ satisfying either (1) $-(1+\eta) I_n\leq  D^2u_0 \leq (1+\eta)I_n$ for some positive dimensional constant $\eta$, (2) $u_0$ is weakly convex everywhere or (3) $u_0$ satisfies a large supercritical Lagrangian
phase condition. 
\end{abstract}

\maketitle

\section{introduction}
 When a family of smooth entire Lagrangian graphs in $\C^n$ evolve by the mean curvature flow their potentials $u:\R^n\times [0, T)\to \R$ will evolve, up to a time dependent constant, by the following fully nonlinear parabolic equation:
\begin{equation}\label{PMA}
\left\{%
\begin{array}{ll}
 &\dfrac{\partial u}{\partial t} =\displaystyle\sum_{i=1}^{n} \arctan \lambda_i\\
 &u(x, 0)=u_0(x)
\end{array}%
\right.
 \end{equation} 
 where $\lambda_i$'s are the eigenvalues of $D^2u$.  Conversely, if $u(x, t)$ solves \eqref{PMA}, then the graphs $(x, Du(x, t))$ in $\R^{2n}$ will evolve by the mean curvature flow up to tangential diffeomorphism.  The main result of the paper is the following.

\begin{thm}\label{entire}
There exists a small positive dimensional constant $\eta=\eta(n)$ such that if $u_0:\R^n \to \R$  is a $C^{1,1}$ function satisfying 
\begin{equation}\label{hesscond}
-(1+\eta) I_n\leq  D^2u_0 \leq (1+\eta)I_n
\end{equation}
  then
\eqref{PMA} has a unique longtime smooth solution $u(x, t)$ for all $t>0$ with initial condition $u_0$ such that the following estimates hold:

\begin{enumerate}
\item [(i)] $- \sqrt{3} I_n\leq D^2u\leq  \sqrt{3} I_n$  for all $t>0$.
\item [(ii)] $\sup_{x\in \R^n}|D^l u(x,t)|^2 \leq C_{l}/t^{l-2} $  for all $l\geq3$, $t>0$ and some $C_{l}$ depending only on $l$.
\item [(iii)] $Du(x, t)$ is uniformly H\"{o}lder continuous in time at $t=0$ with H\"{o}lder exponent $1/2$.
\end{enumerate}
\end{thm}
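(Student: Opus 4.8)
The plan is to reduce the theorem to a priori estimates for smooth solutions and then obtain the solution for merely $C^{1,1}$ data by approximation, the a priori Hessian bound being the crux. Concretely, I would first mollify, $u_0^\e=u_0*\rho_\e$; this is smooth and, being a convex combination of translates of $u_0$, still satisfies $-(1+\eta)I_n\le D^2u_0^\e\le(1+\eta)I_n$, while $Du_0^\e$ is globally Lipschitz so its Lagrangian graph is an entire Lipschitz graph. For such data a smooth solution of \eqref{PMA} exists for a short time (solve the uniformly parabolic equation on balls $B_R$ with boundary data $u_0^\e$ and let $R\to\infty$, or invoke short-time existence for mean curvature flow of entire Lipschitz graphs); once (i) is known the equation stays uniformly parabolic, so the solution continues for all $t>0$. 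Passing $R\to\infty$ and then $\e\to0$ using the uniform estimates (i)--(iii) yields the solution for $u_0\in C^{1,1}$, and uniqueness follows from uniform parabolicity together with a comparison argument.

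The main point, and the step I expect to be the chief obstacle, is estimate (i): persistence of the pinching $-\sqrt3\,I_n\le D^2u\le\sqrt3\,I_n$. Differentiating \eqref{PMA} twice and using that $\Theta:=\sum_i\arctan\lambda_i=\p_tu$ itself solves the linear equation $\p_t\Theta=F^{ij}D_{ij}\Theta$ (so $|\Theta|\le n\arctan(1+\eta)$ for all $t$), one finds that the largest eigenvalue $\mu=\lambda_{\max}(D^2u)$ obeys, at an interior spatial maximum and in the barrier sense, an inequality of the form
\[
(\p_t-F^{ij}D_{ij})\mu\;\le\;-2\sum_{p}\frac{1+\mu\lambda_p}{(\mu-\lambda_p)(1+\lambda_p^2)}\,\sum_{q}\frac{(D_e D^2u)_{pq}^{\,2}}{1+\lambda_q^2},
\]
the sums running over the remaining eigendirections, with $F^{ij}=(1+\lambda_i^2)^{-1}\delta^{ij}$ in the eigenbasis. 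The crucial algebraic fact, coming from the second derivative of the Lagrangian operator, is the factor $1+\mu\lambda_p=\cos(\theta_{\max}-\theta_p)/(\cos\theta_{\max}\cos\theta_p)$ (writing $\lambda_i=\tan\theta_i$): the right side is nonpositive exactly when all products $\lambda_i\lambda_j\ge-1$, i.e. when $\theta_{\max}-\theta_{\min}\le\pi/2$. Since the initial data meets this only up to an $O(\eta)$ error, the technical core is to control that error along the flow — this is where the dimensional smallness of $\eta=\eta(n)$ enters — running the argument simultaneously for $\mu$ and for $\lambda_{\max}(D^2(-u))=-\lambda_{\min}(D^2u)$ (note $-u$ also solves \eqref{PMA}) so that the potentially bad directions are pinned down; the threshold $\sqrt3$, equivalently the phase bound $\pi/3$, is the resulting quantitative constant.

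Granting (i), the equation is uniformly parabolic with $F^{ij}\in[1/4,1]$, and (ii) follows from interior higher-derivative estimates: differentiating \eqref{PMA} repeatedly gives evolution inequalities for $|D^lu|^2$ whose bad terms are absorbed using (i) and the earlier steps (a Bando--Shi / Bernstein iteration with cutoffs), and combining a fixed-scale bound $\sup|D^lu(\cdot,1)|\le C_l$ with the parabolic scaling $u(x,t)\mapsto\rho^{-2}u(\rho x,\rho^2t)$ of \eqref{PMA} — under which $D^lu$ scales by $\rho^{l-2}$ — upgrades it to $\sup|D^lu(\cdot,t)|^2\le C_l/t^{l-2}$. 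Finally, for (iii): since $\p_tDu=D\Theta$ is bounded in norm by $C|D^3u|\le C'/\sqrt t$ by (ii) with $l=3$, integrating in $t$ gives $|Du(x,t)-Du(x,s)|\le\int_s^t C'\tau^{-1/2}\,d\tau=2C'(\sqrt t-\sqrt s)\le2C'\sqrt{t-s}$ for $0<s<t$, and letting $s\to0^+$ yields the asserted $1/2$-Hölder continuity of $Du$ in time down to $t=0$.
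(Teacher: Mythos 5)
Your overall architecture (mollify, short-time existence, a priori Hessian pinching, interior estimates plus parabolic scaling for (ii), integration of $\partial_t Du$ for (iii)) matches the paper's, and your arguments for (ii) granted (i), and for (iii), are essentially the ones used. But there is a genuine gap at the step you yourself flag as the crux, namely (i). Your plan is to run a maximum-principle argument on $\lambda_{\max}(D^2u)$ and observe that the bad term has a good sign exactly when $1+\lambda_i\lambda_j\ge 0$; since the hypothesis \eqref{hesscond} allows $\lambda_{\max}\lambda_{\min}<-1$, you then say ``the technical core is to control that error along the flow --- this is where the dimensional smallness of $\eta(n)$ enters,'' but you give no mechanism for doing so. This is precisely the obstruction the paper is written to overcome: the earlier work \cite{CCH1} proved the theorem for $\eta<0$ by exactly such tensor maximum-principle arguments, and the introduction states explicitly that those arguments break down even at $\eta=0$. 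A direct eigenvalue evolution inequality does not close when the product condition fails, and no choice of the pair $(\lambda_{\max},-\lambda_{\min})$ rescues it; your identification of $\sqrt3$ as ``the resulting quantitative constant'' of such a computation is therefore not substantiated.

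The paper's actual mechanism is different in kind. First, Lemma \ref{preserve1} shows the bound $-I_n\le D^2u\le I_n$ \emph{is} preserved, by rotating coordinates by $\pi/4$ (so the potential becomes convex, all $\lambda_i>0$, and the twice-differentiated equation has a favorable sign) and applying the maximum principle there. Then Proposition \ref{preserve2} gets the $\sqrt3$ bound for small $\eta>0$ by contradiction and compactness: if it failed for $\eta_k\to0$, one rescales at the first point where an eigenvalue reaches $\sqrt{3/2}$ while all eigenvalues still lie in $[-\sqrt3,\sqrt3]$; on that range condition (ii) of Theorem \ref{estimateTY}, $3+\lambda_i^2+2\lambda_i\lambda_j=(\,|\lambda_i|-\sqrt3\,)^2+\ldots\ge0$, holds, so the Nguyen--Yuan interior estimates give compactness, and the limit violates Lemma \ref{preserve1}. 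Thus $\sqrt3$ is the threshold for applicability of Theorem \ref{estimateTY}, not the output of an eigenvalue computation. Relatedly, your derivation of (ii) by ``differentiating repeatedly with cutoffs'' quietly assumes one can pass from an $L^\infty$ Hessian bound to $C^{2,\alpha}$ regularity; since $\sum_i\arctan\lambda_i$ is neither concave nor convex on the range permitted by (i), Evans--Krylov/Krylov--Safonov bootstrapping is not available, and the paper again relies on Theorem \ref{estimateTY} (plus parabolic scaling) for this step. Finally, uniqueness is not a routine comparison argument here given the low regularity at $t=0$; the paper invokes the viscosity-solution uniqueness theorem of \cite{chen-pang}.
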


 In \cite{CCH1} Theorem \ref{entire} was proved for $\eta$ any negative constant in which case it was shown that \eqref{hesscond} is preserved for all $t>0$.  In particular, a priori estimates were established for any solution to \eqref{PMA} with $D^2u$ so bounded.  The estimates combined maximum principle arguments for tensors and a Bernstein theorem for entire special Lagrangians \cite{Y} via a blow up argument.  The estimates depended on the negativity of $\eta$ and could not be applied to the more general case of Theorem \ref{entire} even for $\eta=0$.  
 We overcome this through recent estimates in \cite{TY} for solutions to \eqref{PMA} satisfying certain Hessian conditions (cf. Theorem 2.1 which is Theorem 1.1 in \cite{TY}).  A particular case of Theorem \ref{entire} (similarly for Theorems \ref{entire3} and \ref{entire4}) is where $Du_0:\R^n \to \R^n$ is a lift of a map $f:\T^n \to \T^n$ and $\T^n$ is the standard $n$-dimensional flat torus.  In this ``periodic case", our estimates together with the results in \cite{Si} imply that the graphs $(x, Du(x, t))$  immediately become smooth after initial time and converge smoothly to a flat plane in $\R^{2n}$ (cf. \cite{CCH1,Si, Sm, SmMt}).  In the hypersurface case, the global and local behavior of mean curvature flow of Lipschitz continuous initial graphs has been studied in \cite{HE2, HE}.

After a coordinate rotation described in \S2 (see \eqref{rotatedcoord}), the condition $-I_n<  D^2u_0 < I_n$ corresponds to a convex potential in which case the right hand side of \eqref{PMA} is a concave operator.   This however is not the case under the weaker assumption  (i) in Theorem \ref{entire}.  This is interesting from a PDE standpoint as Krylov's theory for parabolic equations is for the concave operators.  

In light of the above, we apply Theorem \ref{entire} directly to the convex case in the following

 \begin{thm}\label{entire3}
Let $u_0:\R^n \to \R$ be a locally $C^{1,1}$ weakly convex function.  Then
\eqref{PMA} has a unique longtime smooth and weakly convex solution $u(x, t)$ with initial condition $u_0$ such that
\begin{enumerate}
\item [(i)] either $D^2u(x, t) >0$ for all $x$ and $t>0$ or there exists coordinates $x_1,...,x_n$ on $\R^n$ in which $u(x, t)=w(x_{k},...,x_n,t)$ on $\R^n\times[0, \infty)$ where $k>1$ and $w$ is convex with respect to $x_k,...,x_n$ for all $t>0$,\item [(ii)] $\sup_{x\in \R^n}|\nabla^l_t A(x,t)|^2 \leq C_{l}/t^{l+1} $  for all $l\geq0$, $t>0$ and some constant $C_{l}$ depending only on $l$ where $\nabla^l_t A(x,t)$ is the $l$th covariant derivative of the second fundamental form of the embedding $F_t:\R^n\to \R^{2n}$ given by $x\to (x, Du(x, t))$,
\item [(iii)] the Euclidean distance from each point of $F_t(\R^n)$ to $F_0(\R^n)$ in $\R^{2n}$ is H\"{o}lder continuous in time at $t=0$ with H\"{o}lder exponent $1/2$.
\end{enumerate}

\end{thm}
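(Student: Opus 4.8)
The plan is to deduce Theorem \ref{entire3} from Theorem \ref{entire} by a rotation and approximation argument. First I would perform the coordinate rotation of \S2 (see \eqref{rotatedcoord}): in the new coordinates, a weakly convex potential $u_0$ with $D^2 u_0\geq 0$ corresponds to a potential $\wt u_0$ satisfying $-I_n\leq D^2\wt u_0\leq D^2\wt u_0$... more precisely the rotation by angle $\pi/4$ in each coordinate plane sends the Hessian bound $0\leq D^2 u_0$ to $-I_n\leq D^2\wt u_0\leq I_n$, so that \eqref{hesscond} holds with any $\eta>0$, in particular with the dimensional $\eta(n)$ of Theorem \ref{entire}. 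Since $u_0$ is only locally $C^{1,1}$, I would first mollify: set $u_0^\e = u_0 * \rho_\e$, which is smooth, still (weakly) convex, and converges to $u_0$ in $C^{1,1}_{loc}$; in the rotated picture each $\wt u_0^\e$ satisfies \eqref{hesscond}. Theorem \ref{entire} then produces smooth longtime solutions $\wt u^\e(x,t)$ with the uniform estimates (i)--(iii). Undoing the rotation gives solutions $u^\e$ of \eqref{PMA}, and the Hessian bound (i) of Theorem \ref{entire} translates back (the rotation is a fixed linear map on Hessians) into a uniform bound $0\leq D^2 u^\e \leq C(n) I_n$; in particular weak convexity is preserved along the flow. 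The interior estimates (ii) of Theorem \ref{entire} for $D^l u^\e$, $l\geq 3$, are rotation-invariant up to constants, so I can extract a subsequential limit $u^\e\to u$, smooth for $t>0$, solving \eqref{PMA}, weakly convex, with initial data $u_0$; uniqueness follows as in \cite{CCH1} (or from the uniqueness in Theorem \ref{entire} applied after rotation, together with stability under the mollification).

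Next I would upgrade the geometric estimates. The bound $0\leq D^2 u\leq C(n)I_n$ makes the induced metric $g_{ij}=\delta_{ij}+\sum_k u_{ik}u_{jk}$ uniformly equivalent to the Euclidean metric, so the covariant derivatives $\nabla^l_t A$ of the second fundamental form are controlled, up to dimensional constants, by the Euclidean derivatives $D^{l+2} u$ and lower-order terms. Thus estimate (ii) of Theorem \ref{entire}, namely $|D^{l+2}u|^2\leq C_{l+2}/t^{l}$, yields $|\nabla^l_t A|^2\leq C_l/t^{l+1}$, which is exactly (ii) of Theorem \ref{entire3}. For (iii), along the flow each point moves in the normal direction with speed $|H|\leq \sqrt{n}\,\pi/2$ bounded (indeed $H$ is the gradient of the phase), so $|F_t(x)-F_0(x)|$... better: I would use that $\p_t F_t$ is, up to tangential diffeomorphism, $-H$, and that in the potential formulation $\p_t Du = D(\p_t u)$ with $\p_t u=\sum\arctan\lambda_i$ bounded, so $|Du(x,t)-Du(x,0)|\leq$ (from Theorem \ref{entire}(iii)) $C\sqrt t$; combined with $|u(x,t)-u(x,0)|\leq \sqrt n\,\tfrac\pi2\, t$ this gives $\mathrm{dist}(F_t(x),F_0(\R^n))\leq |F_t(x)-F_0(x)|\leq C\sqrt t$, proving (iii).

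The one genuinely new point, and the main obstacle, is the dichotomy in (i): either $D^2u>0$ strictly for all $t>0$, or $u$ splits off flat directions. I would prove this by a strong-maximum-principle argument for the evolution of $D^2 u$. Differentiating \eqref{PMA} twice, the Hessian $h_{ij}=u_{ij}$ satisfies a parabolic equation of the form $\p_t h_{ij} = a^{kl}\p_k\p_l h_{ij} + (\text{terms linear in } h \text{ and } Dh)$, where $a^{kl}=\tfrac{\p}{\p u_{kl}}\sum\arctan\lambda_i$ is positive definite (this is where the Hessian bound $D^2u<\sqrt3 I_n$, hence $\lambda_i^2<3$, is used to control ellipticity). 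Since $u$ is convex, $h=D^2u\geq 0$ is a nonnegative-definite supersolution tensor, and by Hamilton's strong maximum principle for tensors the null space of $h(\cdot,t)$, once nonzero on an open set, is a parallel distribution, constant in $x$ and $t$ for $t>0$; a de Rham–type splitting then gives the coordinates $x_1,\dots,x_n$ and the function $w(x_k,\dots,x_n,t)$. The care needed is that the coefficients depend on $Du$ and are only known to be smooth for $t>0$ with the $1/t^{l-2}$ blow-up, so the maximum principle must be applied on $t\in[\tau,\infty)$ for each $\tau>0$ and the splitting shown independent of $\tau$ by letting $\tau\downarrow 0$ and using continuity of $D^2u$ up to $t=0$ in the $C^{1,1}_{loc}$ sense; alternatively one invokes that the null distribution is determined already by $u_0$ where $D^2u_0$ fails to be positive on a set of positive measure. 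I expect this splitting argument — making Hamilton's tensor maximum principle rigorous for this non-compact, merely-$C^{1,1}$-initial-data setting — to be the most delicate part.
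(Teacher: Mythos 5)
Your overall strategy (rotate by $\pi/4$, invoke Theorem \ref{entire}, rotate back) is the paper's, but there is a genuine gap at the step where you claim that the bound $-\sqrt{3}\,I_n\leq D^2\wt u\leq \sqrt{3}\,I_n$ from Theorem \ref{entire}(i) ``translates back'' to $0\leq D^2u\leq C(n)I_n$. The rotation acts on Hessians through \eqref{rotatedlambda's}, i.e.\ by shifting each $\arctan\lambda_i$ by $\pi/4$, not by a fixed linear map on the matrix $D^2u$; an eigenvalue $\lambda_i(D^2\wt u)\in[1,\sqrt 3]$ has phase $\arctan\lambda_i\in[\pi/4,\pi/3]$, which after undoing the rotation becomes a phase $\geq\pi/2$ --- a vertical (non-graphical) direction, not a bounded eigenvalue of $D^2u$. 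So Theorem \ref{entire} alone does not even guarantee that the rotated solution corresponds to a graph in the original coordinates, let alone a convex one. This is exactly what the paper isolates as the first main difficulty: one must prove that the sharper bound $-I_n\leq D^2\wt u<I_n$ is preserved for all $t>0$. The paper does this with Lemma \ref{preserve1} (preservation of $-I_n\leq D^2\wt u\leq I_n$) together with Lemma \ref{l1}, a strong-maximum-principle propagation statement for the borderline eigenvalue $\pm1$ based on the Warren--Yuan identity \eqref{lambda_n} for $\bigl(\sum g^{ab}\partial^2_{ab}-\partial_t\bigr)\ln\sqrt{1+\lambda_i^2}$ (with a quadratic perturbation to handle repeated eigenvalues): if $\lambda_1=1$ ever occurred at a positive time it would propagate back to $t=0$, contradicting $D^2\wt u_0<I_n$. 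Your proposal contains no substitute for this step, and everything downstream (weak convexity of $u$, the uniform ellipticity you need for the splitting argument, the translation of estimate (ii)) depends on it.

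For the dichotomy in (i), your Hamilton-type tensor maximum principle is the right spirit but, as written, it is run in the original coordinates where $D^2u$ may be unbounded and the symbol $g^{ij}=(I_n+(D^2u)^2)^{-1}$ degenerates, so uniform parabolicity fails precisely where it is needed; the paper instead proves the propagation of a null eigenvalue in the rotated (and a further locally rotated) picture, where the operator is uniformly parabolic, as the scalar statement that $v_{11}=-1$ at one point forces $v_{11}\equiv-1$ (Lemma \ref{l2}), and then obtains the global splitting for all time by integrating $v_{11}$ twice in $r_1$ and invoking the uniqueness theorem of \cite{chen-pang}, rather than a de Rham argument. Your derivations of (ii) and (iii) are essentially fine once the Hessian control is in place (modulo the index slip: $\nabla^l_tA$ is controlled by $D^{l+3}u$, whose square decays like $t^{-(l+1)}$ by Theorem \ref{entire}(ii)).
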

We also prove the following 

 \begin{thm}\label{entire4}
Let $u_0:\R^n \to \R$ be a locally $C^{1,1}$ function satisfying 
\begin{equation}\label{supercritical}
\displaystyle\sum_{i=1}^{n} \arctan \lambda_i  \geq (n-1)\frac{\pi}{2}.
\end{equation}
Then \eqref{PMA} has a unique longtime smooth solution $u(x, t)$ with initial condition $u_0$  such that \eqref{supercritical} is satisfied  with either strict inequality for all 
$t>0$ or equality for all $t\geq0$ in which case $u_0$ must be quadratic.  Moreover, $u(x, t)$  also satisfies (ii) and (iii) in Theorem \ref{entire3}.
\end{thm}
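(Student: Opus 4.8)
The plan is to reduce the supercritical phase case to Theorem \ref{entire} by a linear change of coordinates in the ambient $\C^n\cong\R^{2n}$, i.e.\ a symplectic rotation, exactly as the rotations described in \S2. Recall that the Lagrangian phase $\Theta=\sum_i\arctan\lambda_i$ of the gradient graph is invariant (up to an additive constant) under such rotations, while the eigenvalues $\lambda_i$ of $D^2u$ transform by a Cayley-type transform. Concretely, if $u_0$ satisfies \eqref{supercritical}, then for a suitable rotation by angle close to $\pi/4$ in each coordinate plane, the phase of the rotated potential $\tilde u_0$ lies near $\sum(\arctan\lambda_i-(n-1)\pi/2)\in[0,\pi/2)$ shifted appropriately, and — this is the key elementary computation — the condition $\Theta\ge(n-1)\pi/2$ forces the rotated Hessian $D^2\tilde u_0$ to satisfy a two-sided bound $-(1+\eta)I_n\le D^2\tilde u_0\le(1+\eta)I_n$ for the dimensional $\eta$ of Theorem \ref{entire}. (The mechanism is that $\Theta\ge(n-1)\pi/2$ means at least $n-1$ of the $\arctan\lambda_i$ are large and positive, and after rotating by $\pi/4$ in those planes the corresponding new eigenvalues become small; the one remaining direction is controlled because the $C^{1,1}$ bound on $u_0$ gives a lower bound on each $\lambda_i$, hence an upper bound after the Cayley transform.) One must check the rotation can be chosen uniformly, independent of $x$, since \eqref{supercritical} is a pointwise condition but the $C^{1,1}$ norm is only locally bounded; here the point is that the rotation angle is a fixed constant depending only on $n$ (close to $\pi/4$), not on the local data, so uniformity is automatic.

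Having produced $\tilde u_0$ satisfying \eqref{hesscond}, I would apply Theorem \ref{entire} to obtain the longtime smooth solution $\tilde u(x,t)$ with its estimates (i)--(iii), then rotate back: since the flow \eqref{PMA} is equivariant under ambient symplectic rotations (the mean curvature flow of the graph is, and the potential equation inherits this up to a time-dependent constant, as noted in the introduction), $\tilde u(\cdot,t)$ rotates back to a solution $u(x,t)$ of \eqref{PMA} with initial data $u_0$. Uniqueness transfers likewise. Then I would verify that estimates (ii) and (iii) of Theorem \ref{entire3} hold for $u$: these are stated intrinsically in terms of the second fundamental form $A$ and its time-covariant derivatives, and the Euclidean distance between the evolving submanifold and the initial one, all of which are invariant under the ambient rotation, so they follow from the corresponding content of Theorem \ref{entire} applied to $\tilde u$ (the $\sqrt3$ Hessian bound in Theorem \ref{entire}(i) gives a uniform bound on $|A|$, and part (ii) of Theorem \ref{entire} gives the higher derivative decay, which one repackages as covariant time-derivative decay of $A$).

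The remaining assertion is the rigidity statement: \eqref{supercritical} holds with strict inequality for all $t>0$, or with equality for all $t\ge0$ in which case $u_0$ is quadratic. For this I would argue that $\Theta(x,t)=\sum_i\arctan\lambda_i(x,t)$ satisfies a linear parabolic equation along the flow — namely $\partial_t\Theta=\Delta_g\Theta$ where $\Delta_g$ is the (time-dependent) Laplace--Beltrami operator of the evolving graph metric $g=I+(D^2u)^2$; this is the standard fact that the Lagrangian angle is harmonic-heat along Lagrangian MCF. The supercriticality $\Theta\ge(n-1)\pi/2$ is preserved (it is preserved under the rotation as the bound \eqref{hesscond} is, and hence for $\tilde u$ by Theorem \ref{entire}, or directly by the maximum principle applied to $\Theta-(n-1)\pi/2\ge0$), and then the strong maximum principle for the uniformly parabolic operator $\partial_t-\Delta_g$ on the complete manifold gives the dichotomy: either $\Theta>(n-1)\pi/2$ strictly for all $t>0$, or $\Theta\equiv(n-1)\pi/2$. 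In the equality case, $\Theta\equiv(n-1)\pi/2$ makes the graph a special Lagrangian submanifold (with that phase) for every $t$, hence stationary, so $u(\cdot,t)=u_0$ for all $t$ and $u_0$ solves the special Lagrangian equation $\sum\arctan\lambda_i=(n-1)\pi/2$ with a global two-sided Hessian bound; by the Bernstein-type rigidity for entire special Lagrangian graphs (the theorem of \cite{Y} invoked in the introduction, together with the fact that phase $(n-1)\pi/2$ with bounded Hessian is the borderline case) $u_0$ must be a quadratic polynomial.

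The main obstacle I anticipate is the first step: verifying precisely that \eqref{supercritical} together with only a \emph{local} $C^{1,1}$ bound implies the \emph{global} two-sided Hessian bound \eqref{hesscond} after a single fixed rotation. One has to be careful that the lower eigenvalue bound needed to control the one ``uncontrolled'' direction after rotating by $\pi/4$ really does follow from supercriticality alone rather than from the local bound (which would destroy uniformity); I expect the correct statement is that $\Theta\ge(n-1)\pi/2$ forces each $\lambda_i>-\varepsilon(n)$ and at least $n-1$ of them $>R(n)$ for dimensional constants, and a short trigonometric lemma then yields \eqref{hesscond} for the rotated Hessian with the right $\eta(n)$. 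Making this lemma sharp enough to land inside the $\eta$ of Theorem \ref{entire} is where the real work lies.
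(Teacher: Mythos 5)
Your reduction of the existence statement to Theorem \ref{entire} via a $\pi/4$ rotation is essentially what the paper does (routed through Theorem \ref{entire3}: supercriticality forces convexity, and convexity after the $\pi/4$ rotation gives $-I_n\le D^2\tilde u_0<I_n$), and your treatment of the dichotomy via the strong maximum principle and the Bernstein theorem of \cite{Y} matches the paper's endgame. The genuine gap is the middle step: preservation of \eqref{supercritical} along the flow. You assert this follows ``directly by the maximum principle applied to $\Theta-(n-1)\pi/2\ge 0$,'' or alternatively that it is ``preserved under the rotation as the bound \eqref{hesscond} is.'' Neither works as stated. The Hessian bound of Theorem \ref{entire}(i) only gives $|\Theta|\le n\arctan\sqrt3$ and says nothing about a lower phase bound, so preservation of \eqref{hesscond} does not imply preservation of the (rotated) supercritical condition. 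And the direct maximum-principle argument fails for exactly the two reasons the paper flags: $u_0$ is only locally $C^{1,1}$, so $\Theta(D^2u(\cdot,0))$ is defined only a.e.\ and $\Theta$ need not be continuous down to $t=0$, which is required to run the weak maximum principle from the initial time; and in the original coordinates $D^2u$ is unbounded above, so the symbol $g^{ij}$ in $\partial_t\Theta=\sum g^{ij}\partial_{ij}\Theta$ is not uniformly elliptic.

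The paper's resolution, which is the real content of the proof and is absent from your proposal, is a three-step transformation. First, a rotation by a small angle $\sigma_0$ gives a two-sided Hessian bound $-K(\sigma_0)I_n\le D^2v\le K(\sigma_0)^{-1}I_n$ together with $\Theta(D^2v_0)\ge(n-1)\tfrac{\pi}{2}-n\sigma_0$. Second, $v_0$ is mollified by heat-kernel convolution; the key point is that, since $(n-1)\tfrac{\pi}{2}-n\sigma_0>(n-2)\tfrac{\pi}{2}$, the set $S$ of symmetric matrices with $\Theta\ge(n-1)\tfrac{\pi}{2}-n\sigma_0$ is convex (by \cite{Y2}), so the convolution --- a weighted average of elements of $S$ --- produces smooth approximations with bounded higher derivatives that still satisfy the phase lower bound. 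Third, a further $\pi/4$ rotation puts each approximate initial datum inside \eqref{hesscond}, so Theorem \ref{entire} gives approximate flows with bounded derivatives of all orders, on which the weak maximum principle legitimately preserves the (twice-rotated) phase bound; one then passes to the limit using the uniqueness theorem of \cite{chen-pang}. Without the convexity of $S$ you have no way to guarantee that smoothed initial data still satisfies the supercritical condition, and without smoothing you cannot apply the maximum principle at $t=0$. (A minor further point: in the equality case the flow is not literally stationary --- $u(x,t)=u(x,t')+(n-1)\tfrac{\pi}{2}(t-t')$ moves by a time-dependent constant --- but your conclusion that $u_0$ solves the special Lagrangian equation and is quadratic by \cite{Y} is correct.)
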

\begin{rem}\label{r1} Note that if $u_0$ satisfies \eqref{supercritical} then $u_0$ must be convex.    
\end{rem}

 As discussed above, after a coordinate rotation we may assume $D^2 u_0$ in Theorem \ref{entire3} satisfies the strict inequality $-I_n\leq  D^2u_0 < I_n$ in which case Theorem \ref{entire} immediately provides a longtime solution $u(x, t)$ to \eqref{PMA}.  In order for this to correspond to the desired longtime solution in the original coordinates we must first show $-I_n\leq  D^2u < I_n$ is preserved for all $t>0$ and this is the first main difficulty in proving (i) in Theorem \ref{entire3}.   This in particular will rule out the possibility of $\lambda_i (D^2u(x, t))=1$ for some $(x, t)$ which would correspond to a non-graphical (vertical) Lagrangian in the original coordinates.  The second main difficulty comes from showing that either $-I_n < D^2u$ for all $t>0$, or the solution splits off a quadratic term as in Lemma 4.2 and this will give (i) in Theorem \ref{entire3} after rotating back to the original coordinates.   

As for Theorem \ref{entire4}, by Remark \ref{r1}, if $u_0$ satisfies \eqref{supercritical} then it is automatically convex hence Theorem \ref{entire3} guarantees a longtime convex solution $u(x, t)$ to \eqref{PMA}.  The difficulty in showing \eqref{supercritical} is preserved for all $t>0$ comes from the fact that a maximum principle may not directly apply as $u_0$ is only $C^{1,1}$ with possibly unbounded Hessian.  Performing a similar but small $\sigma_0$ coordinate rotation, we can assume that 
 $-K(\sigma_0)I_n<  D^2u_0 < 1/K(\sigma_0) I_n$, for some constant $K(\sigma_0)$ which approaches zero as $\sigma_0\to 0$, and satisfies 
\begin{equation}\label{critical}\displaystyle\sum_{i=1}^{n} \arctan \lambda_i  \geq (n-1)\frac{\pi}{2}-n\sigma_0.\end{equation}
We then observe that the set of positive semi-definite real $n\times n$ matrices satisfying  \eqref{critical}  is a convex set $S$, and we approximate $u_0$ by convolution with the standard heat kernels, which has the effect of averaging elements in $S$, thus producing smooth approximations with bounded derivatives (of order $2$ and higher) and Hessians belonging to $S$.  We perform a further $\pi/4$ coordinate rotation after which the smooth approximated initial data satisfies (ii) in Theorem \ref{entire} and 
\begin{equation}\label{criticall}\displaystyle\sum_{i=1}^{n} \arctan \lambda_i  \geq (n-1)\frac{\pi}{2}-n\frac{\pi}{4}-n\sigma_0.\end{equation}
By Theorem \ref{entire} we then apply a maximum principle argument to show \eqref{criticall} is preserved starting from each approximate initial data.

 The outline of the rest of the paper is as follows.  In \S2 we provide preliminary results which will be used in the proofs of the theorems.  In particular, we state the a priori estimates in \cite{TY}.  Theorem \ref{entire} is proved in \S 3 and Theorems \ref{entire3} and \ref{entire4} are proved in  \S 4. 

\section{preliminaries}

In this section we establish some preliminary results.  

 \begin{prop}[ \cite{CCH1}; Proposition 5.1 ]\label{shorttime}
 Suppose $u_0:\R^n \to \R$ is a smooth function such that $\sup |D^l u_0| <\infty$ for each $l\geq 2$.  Then \eqref{PMA} has a smooth solution $u(x, t)$ on $\R^n \times [0, T)$ for some $T>0$ such that $\sup_{x\in \R^n}|D^l u(x, t)|<\infty$ for every $l\geq 2$ and $t\in [0, T)$.  
 \end{prop}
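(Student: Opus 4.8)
The plan is to reduce \eqref{PMA}, whose initial data $u_0$ and gradient $Du_0$ need not be bounded, to a fully nonlinear uniformly parabolic Cauchy problem with \emph{bounded} initial data and smooth coefficients having bounded derivatives of all orders, and then to invoke standard short-time existence theory. Write $F(D^2u)=\sum_{i=1}^n\arctan\lambda_i$ for the operator on the right of \eqref{PMA} and put $M=\sup_{\R^n}|D^2u_0|<\infty$. The two features of $F$ that make the reduction work are that $F$ is globally bounded, $|F|<n\pi/2$, and that it depends on $u$ only through $D^2u$. Seeking the solution in the form $u=u_0+v$ with $v(\cdot,0)=0$, the function $v$ must solve
\begin{equation*}
\partial_t v=F\bigl(D^2u_0(x)+D^2v\bigr)=:\Phi(x,D^2v),\qquad v(\cdot,0)=0 .
\end{equation*}
Since $|\partial_t v|=|F|<n\pi/2$, any solution automatically obeys $|v(x,t)|\le n\pi t/2<\infty$, so the unboundedness of $u_0$ and $Du_0$ is no longer an issue. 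The coefficient $\Phi(x,S)=F(D^2u_0(x)+S)$ is smooth in $(x,S)$; each $x$-derivative $\partial_x^\beta\Phi$ with $|\beta|\ge1$ is a universal polynomial in derivatives of $u_0$ of order between $3$ and $|\beta|+2$, with bounded smooth coefficients (derivatives of $F$ evaluated at $D^2u_0(x)+S$), hence is bounded on $\R^n$ locally uniformly in $S$ by the hypothesis $\sup|D^lu_0|<\infty$, $l\ge2$. Finally $\partial_S\Phi(x,S)=DF(D^2u_0(x)+S)$ is symmetric positive definite with eigenvalues $(1+\mu_i^2)^{-1}$, the $\mu_i$ being the eigenvalues of $D^2u_0(x)+S$; in particular $\bigl(1+(M+1)^2\bigr)^{-1}I_n\le\partial_S\Phi(x,S)\le I_n$ whenever $|S|\le1$, so the equation for $v$ is uniformly parabolic as long as $|D^2v|\le1$.

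With this set-up, short-time existence of $v$ follows from the standard machinery for fully nonlinear uniformly parabolic equations: linearize about the frozen initial data, solve the resulting linear Cauchy problems on $\R^n$ in the parabolic H\"{o}lder spaces $C^{2+\alpha,1+\alpha/2}$ (solvable because the coefficients are bounded and uniformly H\"{o}lder in $x$), and run a contraction-mapping iteration on a time interval $[0,T)$ with $T$ small enough that the iterates remain in the region $|D^2v|\le1$ where the problem is uniformly parabolic — this last point uses $v(\cdot,0)=0$ together with the uniform parabolic H\"{o}lder control supplied by the iteration. The outcome is $v\in C^{2+\alpha,1+\alpha/2}(\R^n\times[0,T))$ solving the equation above, with $D^2v$ bounded on $\R^n\times[0,T)$.

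Smoothness and the remaining bounds follow by bootstrapping. Differentiating the $v$-equation in $x$, the function $D_kv$ satisfies a linear uniformly parabolic equation whose leading coefficients $\partial_S\Phi(x,D^2v)$ and inhomogeneous term $\partial_{x_k}\Phi(x,D^2v)$ are bounded and H\"{o}lder on $\R^n\times[0,T)$ — by the structure of $\Phi$ recorded above and the regularity of $v$ already obtained — so interior parabolic Schauder estimates give $D^3v$ bounded; iterating, and using $\sup|D^lu_0|<\infty$ ($l\ge2$) at each stage, one gets $\sup_{\R^n}|D^lv(\cdot,t)|<\infty$ for every $l\ge2$ and $t\in[0,T)$, and $v$ smooth. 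Then $u=u_0+v$ is a smooth solution of \eqref{PMA} with $\sup_{\R^n}|D^lu(\cdot,t)|=\sup_{\R^n}|D^lu_0+D^lv(\cdot,t)|<\infty$ for all $l\ge2$.

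The one genuine obstacle is the reduction step: because neither $u_0$ nor $Du_0$ is assumed bounded, one cannot work in sup-norm based function spaces for $u$ directly, and simply truncating $u_0$ would destroy the Hessian bound where $Du_0$ is large. The substitution $u=u_0+v$, available precisely because $\arctan$ is bounded and $F$ sees only $D^2u$, is what turns this into a textbook uniformly parabolic Cauchy problem; once past it, the existence and regularity steps are routine. (This is the argument of \cite{CCH1}, Proposition~5.1.)
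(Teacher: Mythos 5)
Your proposal is correct and follows essentially the same route as the paper: the paper likewise writes $u=u_0+v$ with $v(x,0)=0$, works in the parabolic H\"{o}lder spaces $C^{2+\alpha,1+\alpha/2}$ on $\R^n$ where the linearized operator $\partial_t-g^{ij}(u_0+v)\partial_{ij}$ is uniformly parabolic with bounded H\"{o}lder coefficients, and obtains existence by the inverse function theorem (applied to a time-shifted right-hand side $w_\tau$, which plays the role of your ``take $T$ small'' in the contraction argument). The subsequent bootstrap — freezing coefficients, applying local Schauder estimates, and translating by arbitrary $x_0\in\R^n$ to get the global bounds $\sup_{\R^n}|D^lu|<\infty$ — matches your final paragraph.
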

\begin{rem}
In Proposition 5.1 in \cite{CCH1} it was shown that the non-parametric mean curvature flow equation 
  \begin{equation}\label{GMCF}
\left\{%
\begin{array}{ll}
 & \dfrac{\partial f^{a}}{\partial t} = \displaystyle \sum_{i,j=1}^{n} g^{ij}(f)( f^{a})_{ij}\\
 &f(x, 0)=Du_0(x)
\end{array}%
\right.
\end{equation}
where $g^{ij}(f)$ is the matrix inverse of $g_{ij}(f):= \delta_{ij}+\sum_{a=1}^n f^a_i f^a_j$, has a short time solution $f(x, t)$ provided $u_0$ satisfies the conditions in Proposition \ref{shorttime}.   As explained in \cite{CCH1} (see Lemma 5.2), this in fact provides a short time solution $u(x, t)$ to \eqref{PMA} as in Proposition \ref{shorttime} such that $f(x, t)=Du(x, t)$ and the proof of Proposition 5.1 in \cite{CCH1} can also be adapted directly to \eqref{PMA} to establish Proposition \ref{shorttime}.  For convenience of the reader and completeness, we provide the details of this argument below.
\end{rem}  

\begin{proof} 

Let $C^{k+\alpha, k/2+\alpha/2}$ denote the standard parabolic H\"{o}lder spaces on $\R^n\times [0, 1)$.  Define

  $$ \mathcal{B}=\left\{v\in C^{2+\alpha,1+\frac\a2}| \ v(x,0)=0\right\}
  $$
and define a  map
$
F: \mathcal{B}\to C^{\alpha,\frac\a2}
$
by
$$
F(v)=\dfrac{\p v}{\p t}-\Theta(v)
$$
where $\Theta(v):=\sum_{i=1}^n \arctan \lambda_i(D^2(u_0+v))$.  Then the differential $DF_{v}$ at any $v\in \mathcal{B}$ is given by
$$
DF_{v}(\phi)=\dfrac{\p \phi}{\p t}- \displaystyle \sum_{i,j=1}^n g^{ij}(u_0+v)\phi_{ij}
$$
where $g^{ij}(u_0+v)$ is the matrix inverse of $I_n+ [D^2(u_0+v)]^2$.

 \begin{bf}Claim 1:\end{bf} $DF_{v}$ is a bijection from $T_{v} \mathcal{B}$ onto $T_{F(v)} C^{ 2+\alpha,1+\frac\a2}.$   

This follows from the general theory of linear parabolic equations on $\R^n\times [0, 1)$ with H\"{o}lder continuous coefficients.

 Now define functions $f_1,...,f_3$ on $\R^n$ recursively by 
\begin{equation}
\begin{split}
f_1&:=\Theta(u_0)\\
f_2&:=g^{ij}(u_0)\partial^2_{ij}f_1.\\
\end{split}
\end{equation}
 Then we see that $\sup_{\R^n}|D^l f_i|<\infty$ for every $i$ and $l$, and if we let $w_0=F(v_0)$ where $v_0=t f_1 + t^2  /2 f_2$, then a straightforward computation gives \begin{equation}\label{1}\partial^l_t F(v_0)(x, 0)=0\end{equation} for $l\leq1$ and \begin{equation}\label{2} \sup_{\R^n\times[0, 1)}|D_x^l D^m_t w_0|<\infty\end{equation} for every $l, m\geq 0$.  In particular $w_0 \in C^{\alpha,\frac\a2}$.  By the inverse function theorem there exists $\e>0$ such that $||w-w_0||_{\alpha, \frac\alpha2}<\e$ implies $F(v)=w$ for some $v\in\mathcal{B}$.

For any $0<\tau<1$, define $w_\tau$ by
\begin{equation}
w_\tau(x, t)=\left\{%
\begin{array}{ll}
 &   0, \hspace{2.1cm} t\leq \tau\\
 & w_0(x, t-\tau),\hspace{0.25cm} \tau<t<1.
\end{array}%
\right.
\end{equation}

\begin{bf}Claim 2:\end{bf} $||w_\tau-w_0||_{\alpha, \frac\alpha2}<\e$ for sufficiently small $\tau>0$.

By \eqref{1} and \eqref{2}, it follows that $w_{\tau}\in C^{ \alpha,\frac\a2}$ and $\|w_{\tau}\|_{\alpha,\frac\a2}$ is bounded uniformly and independently of $\tau$.  From this and the fact that $w_\tau-w_0$ converges uniformly to $0$ in $C^0$ as $\tau \to 0$, it is not hard to show the claim follows.

Hence by the inverse function theorem we have $F(v)=w_{\tau}$ for some $0<\tau<1$ and $v\in C^{2+\alpha, 1+\alpha/2}$.  In particular $u_0+v$ solves \eqref{PMA} on $\R^n\times[0, \tau]$.  Now the higher regularity of $u$ can be shown as follows.  For any $x_0\in \R^n$, consider the function $$\tilde{u}(x, t):= u(x  + x_0,t)-u( x_0, 0)-Du( x_0, 0)\cdot x.$$  Then $\tilde{u}(x, t)\in \mathcal{B}$ and still solves \eqref{PMA} on $\R^n\times[0, \tau]$.  Now we can write \eqref{PMA} as
\begin{equation}\label{bootstrap}
\begin{split}
 \dfrac{\p \tilde{u}}{\p t} &=  \sum_{i=1}^{n} \arctan \lambda_i (D^2\tilde{u})\\
 & = \int_0^1 \dfrac{\p}{\p s}  \left( \sum_{i=1}^{n}\arctan \lambda_i(D^2(s\tilde{u}))\right)ds\\
 &=\left(\int_0^1 g^{ij}(s\tilde{u})ds\right) \partial^2_{ij}\tilde{u}.
  \end{split}
  \end{equation}

Notice that $D\tilde{u}(0,0)=\tilde{u}(0,0)=0$ and that $D^2\tilde{u}(x,t)=D^2u(x+x_0,t)$ is uniformly bounded on $\R^n\times[0, \tau]$.  Now if we let $B(1)$ be the unit ball in $\R^n$ it follows from \eqref{PMA} that $\tilde{u}(x, t)$ and thus $D\tilde{u}(x,t)$ is uniformly bounded on $B(1)\times[0, \tau]$, giving $\tilde{u}(x, t)\in C^{2+ \alpha,1+\frac{1}{2}+ \frac\a2}(B(1)\times[0, \tau])$.  In particular, by freezing the symbol $a^{ij}:=\int_0^1 g^{ij}(s\tilde{u})ds$ in \eqref{bootstrap}, we can view \eqref{bootstrap} as a linear  parabolic equation for $\tilde{u}$ with coefficients uniformly bounded in $C^{\alpha,\frac{1}{2}+ \frac\a2}(B(1)\times[0, \tau])$.   Now applying the local parabolic Schauder estimates (Theorem 8.12.1, \cite{Kr}) and a standard bootstrapping argument to \eqref{bootstrap} we may then bound the $C^{l+ \alpha}$ norm of $\tilde{u}(x, t)$ on $B(1)$ by a constant depending only on $t$ and $l$.   

 Now the fact that $v$ is smooth with bounded derivatives as in the theorem follows by repeating the above argument for any $x_0 \in \R^n$.
\end{proof}

  \begin{lem}[\cite{CCH1}; Lemma 5.1] \label{approx}
Let $u_0:\R^n \to \R$ be a $C^{1,1}$ function satisfying  $-C_0 I_n \leq D^2 u_0 \leq C_0 I_n$ for some constant $C_0 >0$. Then there exists a sequence of smooth functions $u^k_0:\R^n\to \R$  such that
 \begin{enumerate}
\item [(i)] $u^k_0 \to u_0$ in $C^{1+\alpha}(B_R(0))$ for any $R$ and $0<\alpha<1 $,
\item [(ii)] $-C_0 I_n \leq D^2 u^k_0 \leq C_0 I_n$ for every $k$, 
 \item [(iii)] $\sup_{x\in\R^n}|D^l u^k_0| <\infty$ for every $l \geq 2$ and $k$.
 \end{enumerate}
 \end{lem}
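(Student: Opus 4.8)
The plan is to define $u_0^k := u_0 * \rho_{1/k}$, where $\rho_\e(x) = \e^{-n}\rho(x/\e)$ is a standard nonnegative mollifier supported in the unit ball with $\int_{\R^n}\rho = 1$. Since $u_0$ is continuous (being $C^{1,1}$) and $\rho_{1/k}$ has compact support, the convolution is well defined pointwise even though $u_0$ itself may grow at infinity, and $u_0^k \in C^\infty(\R^n)$. I expect (ii) and (iii) to be essentially immediate from the hypotheses, while (i) requires one short interpolation argument.

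For (ii): the hypothesis $-C_0 I_n \le D^2 u_0 \le C_0 I_n$ says precisely that the a.e.\ defined (equivalently, distributional) Hessian of the $C^{1,1}$ function $u_0$ is an $L^\infty$ matrix field with all eigenvalues in $[-C_0,C_0]$ a.e. Differentiating under the convolution gives $D^2 u_0^k = (D^2 u_0) * \rho_{1/k}$, so for any unit vector $v \in \R^n$,
\[
v^{T} D^2 u_0^k(x)\, v = \int_{\R^n} v^{T} D^2 u_0(x-y)\, v\ \rho_{1/k}(y)\, dy \in [-C_0, C_0],
\]
because $\rho_{1/k} \ge 0$ and $\int \rho_{1/k} = 1$; this is (ii). For (iii), for each $l \ge 2$ write $D^l u_0^k = (D^2 u_0) * D^{l-2}\rho_{1/k}$, whence $\sup_{\R^n} |D^l u_0^k| \le \|D^2 u_0\|_{L^\infty(\R^n)}\, \|D^{l-2}\rho_{1/k}\|_{L^1(\R^n)} < \infty$.

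For (i), the key observation is that the gradients $D u_0^k = (D u_0) * \rho_{1/k}$ are equi-Lipschitz: since $Du_0$ is Lipschitz with constant $\le C_0$ (by the Hessian bound), so is each $D u_0^k$, and hence $D u_0^k - D u_0$ is Lipschitz with constant $\le 2C_0$ on every ball. By the standard properties of mollification together with the continuity of $u_0$ and $D u_0$, we have $u_0^k \to u_0$ and $D u_0^k \to D u_0$ uniformly on $B_R(0)$ for every $R$. Applying the interpolation inequality $[g]_{C^{0,\alpha}(B_R)} \le C\, \|g\|_{C^0(B_R)}^{1-\alpha}\, [g]_{C^{0,1}(B_R)}^{\alpha}$ to $g = D u_0^k - D u_0$ then yields $D u_0^k \to D u_0$ in $C^\alpha(B_R)$ for every $R$ and every $0 < \alpha < 1$, which combined with the $C^0$ convergence of $u_0^k$ gives (i).

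The only point that I would justify carefully, rather than routine mollifier bookkeeping, is the first step of the argument for (ii): that for a $C^{1,1}$ function the distributional second derivatives are represented by an $L^\infty$ matrix field satisfying the matrix inequality a.e., and that convolution against the nonnegative kernel $\rho_{1/k}$ preserves this pointwise matrix inequality. Everything else is a routine application of standard mollifier estimates together with the $C^0$--$C^{0,1}$ interpolation inequality, so I do not anticipate a serious obstacle.
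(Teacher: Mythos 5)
Your proposal is correct and follows essentially the same route as the paper: the paper also defines $u_0^k$ by convolving $u_0$ against a nonnegative unit-mass kernel (the heat kernel $K(x,y,1/k)$ rather than a compactly supported mollifier), obtains (ii) by the a.e.\ boundedness of $D^2u_0$ together with positivity and unit mass of the kernel, and obtains (iii) by writing $D^l u_0^k$ as $D^2u_0$ convolved with $l-2$ derivatives of the kernel. The only cosmetic difference is the choice of kernel; your compactly supported mollifier sidesteps the (easily handled) convergence question for the heat-kernel integral against a quadratically growing $u_0$, and your interpolation argument for (i) is a careful version of what the paper dismisses as "easily verified."
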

 
 \begin{proof}
 Let 
\begin{equation}\label{approxx}
u^k_0(x)=\int_{\R^n} u_0(y)K\left(x, y, \frac{1}{k}\right)dy
\end{equation}
where $K(x, y, t)$ is the standard heat kernel on $\R^n\times (0, \infty)$.    Conditions (i) and smoothness of $u^k_0$ are easily verified.  By assumption, $D_y^2 u_0(y)$ is a well defined and uniformly bounded function almost everywhere on $\R^n$ and we may write 
$$
D_x^lu^k_0 (x)=\int_{\R^n} D_y^2 u_0(y)D_x^{l-2}K\left(x, y, \frac{1}{k}\right)dy
$$ 
for every $l\geq 2$ from which it is easy to see that conditions (ii) and (iii) is also true.  \end{proof}

\begin{thm}[\cite{TY}; Theorem 1.1 \label{estimateTY}]
Let $u(x, t)$ be a smooth solution to \eqref{PMA} in $Q_1 \subset \R^n\times (-\infty, 0]$.  When $n\geq4$ we also assume that at least one of the following conditions holds in $Q_1$
\begin{enumerate}
\item[(i)] $\displaystyle\sum_{i=1}^{n} \arctan \lambda_i \geq (n-2)\frac{\pi}{2},$
\item[(ii)] $3+\lambda_i^2+2\lambda_i\lambda_j\geq 0$ for  all $1\leq i, j \leq n$.
\end{enumerate}
Then we have
\begin{equation}\label{TYe1} [u_t]_{1, \frac{1}{2}; Q_{1/2}} +[D^2 u]_{1, \frac{1}{2}; Q_{1/2}} \leq C(\| D^2u\|_{L^{\infty}(Q_1)}).
\end{equation}
\end{thm}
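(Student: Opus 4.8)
The plan is to adapt to the parabolic flow the geometric, integral-estimate method used for the elliptic special Lagrangian equation, which delivers interior Hessian-type bounds precisely in those regimes (low dimension, supercritical phase) where the operator fails to be concave and a direct Evans--Krylov argument is unavailable. Write $M:=\|D^2u\|_{L^\infty(Q_1)}$ and regard $M_t:=\{(x,Du(x,t)):x\in B_1\}$ as a family of Lagrangian submanifolds of $\R^{2n}$; by the discussion in \S1 this family evolves by mean curvature flow up to tangential diffeomorphism, with induced metric $g_{ij}=\delta_{ij}+(D^2u)^2_{ij}$, Lagrangian angle $\Theta=\sum_i\arctan\lambda_i$, mean curvature vector $H$ with $|H|\le\sqrt n\,|A|$, and second fundamental form $A$ satisfying $c(M)\,|D^3u|^2\le|A|^2\le|D^3u|^2$. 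Since $|\lambda_i|\le M$, the metric $g$ is uniformly equivalent to the Euclidean one on $Q_1$ and $\sqrt{\det g}\le(1+M^2)^{n/2}$ bounds the local area ratios of $M_t$; moreover, differentiating \eqref{PMA} in a fixed direction $e$ shows $D_eu$ solves the linear uniformly parabolic equation $\p_t(D_eu)=g^{ij}\p_i\p_j(D_eu)$. It therefore suffices to prove the local bound $\sup_{Q_{3/4}}|A|^2\le C(M)$: this makes the coefficients $g^{ij}$ Lipschitz on $Q_{3/4}$, and interior parabolic Schauder theory (as used in the proof of Proposition \ref{shorttime}), together with one round of bootstrapping, then yields $[u_t]_{1,\frac12;Q_{1/2}}+[D^2u]_{1,\frac12;Q_{1/2}}\le C(M)$.

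The core of the argument is a differential inequality on $M_t$ for the logarithmic volume element
\[ b:=\log\sqrt{\det\big(I_n+(D^2u)^2\big)}=\sum_{i=1}^n\log\sqrt{1+\lambda_i^2}, \]
which is bounded above by $\tfrac n2\log(1+M^2)$. Computing $(\p_t-\Delta_{g(t)})b$ along the flow, one obtains a quadratic form in $\nabla A$ (equivalently, in the third derivatives of $u$) whose coefficients, in a frame diagonalizing $D^2u$, assemble Jacobi-type symmetrizations of the expressions $3+\lambda_i^2+2\lambda_i\lambda_j$. Hypothesis (ii) is exactly the nonnegativity of these expressions, while hypothesis (i) forces the same sign via the convexity of the superlevel set $\{\sum_i\arctan\lambda_i\ge(n-2)\tfrac\pi2\}$; for $n\le3$ the relevant form is nonnegative with no extra hypothesis, which is why none is imposed there. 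In each of these cases one arrives at the strong subsolution inequality
\[ (\p_t-\Delta_{g(t)})\,b\;\le\;-\,c(n)\,\big|\nabla_{g(t)}b\big|_{g(t)}^2\;\le\;0, \]
so that $\phi:=e^{c(n)b}$ is a bounded, nonnegative subsolution of the heat equation on the evolving submanifold.

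From here I would argue in two steps. First a Caccioppoli estimate: testing the subsolution inequality for $\phi$ against $\psi^2\phi$ for a space--time cutoff $\psi$ with $\psi\equiv1$ on $Q_{7/8}$ and supported in $Q_1$, integrating over $M_t$, and using that the Laplace--Beltrami operator is self-adjoint for $d\mu_t$ and that the motion of $d\mu_t$ contributes only lower-order or favourably-signed $|H|^2$ terms, gives $\iint_{Q_{7/8}}|\nabla_{g(t)}b|^2\,d\mu_t\,dt\le C(M)$, that is, $\iint_{Q_{7/8}}|A|^2\le C(M)$ (only $(\p_t-\Delta)b\le0$ is used here). Second a parabolic Moser iteration: $|A|^2$ satisfies a Simons-type inequality $(\p_t-\Delta_{g(t)})|A|^2\le C|A|^4-2|\nabla A|^2$ on $M_t$, and combining this with the $L^2$ bound just obtained, the Michael--Simon Sobolev inequality on $M_t$ (its $|H|$-term absorbed via $|H|\le\sqrt n\,|A|$ and the integral bound, in the usual way), and the sharp gradient term $-c(n)|\nabla_{g(t)}b|^2$ from the second paragraph to control the quartic reaction term, one upgrades the $L^2$ bound to $\sup_{Q_{3/4}}|A|^2\le C(M)$. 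With the reduction of the first paragraph, this finishes the proof.

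The step I expect to be the main obstacle is the second paragraph: showing that the quadratic form representing $(\p_t-\Delta_{g(t)})b$ minus its diffusion part is nonnegative, and indeed bounded below by a definite multiple of $|\nabla_{g(t)}b|^2$, under hypothesis (i) or (ii) (or unconditionally for $n\le3$). This is where the precise algebra of the conditions enters and where the trichotomy ``$n\le3$ / supercritical phase / $3+\lambda_i^2+2\lambda_i\lambda_j\ge0$'' is genuinely forced; everything downstream is standard interior machinery for geometric parabolic equations. A secondary technical point is that the operator arising naturally from differentiating \eqref{PMA} is in nondivergence form in the graph coordinates, so the Caccioppoli and Moser steps must be carried out intrinsically on $M_t$, with the first-order error and $|H|$ terms handled by the usual absorption inequalities.
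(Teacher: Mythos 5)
First, a point of orientation: the paper does not prove this statement. Theorem \ref{estimateTY} is quoted verbatim from Nguyen--Yuan \cite{TY} (their Theorem 1.1) and used as a black box; the only material here that touches its proof is the pointwise identity \eqref{lambda_n} and its summed form \eqref{111}--\eqref{222}, which are indeed the algebraic heart of \cite{TY}. Your second paragraph correctly identifies that heart: for $b=\ln\sqrt{\det(I_n+(D^2u)^2)}$ the operator $\Delta_{g(t)}-\partial_t$ produces a quadratic form in the second fundamental form $A$ (not in $\nabla A$, as you write) whose coefficients are exactly $1+\lambda_\gamma^2$, $3+\lambda_\alpha^2+2\lambda_\alpha\lambda_\gamma$ and $3+\lambda_\alpha\lambda_\beta+\lambda_\beta\lambda_\gamma+\lambda_\gamma\lambda_\alpha$, and hypotheses (i)/(ii) (or $n\le 3$) are what make these nonnegative, so $b$ is a bounded subsolution. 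Your opening and closing reductions (a pointwise bound on $|A|$ over $Q_{3/4}$ implies \eqref{TYe1} by Schauder theory and the equation) are also fine.

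The gaps lie in the passage from the subsolution property to the pointwise bound, and they are not routine. (a) You identify $|\nabla_{g(t)}b|^2$ with $|A|^2$. In a frame diagonalizing $D^2u$ one has $\nabla_k b=\sum_i\lambda_i h_{iik}$, a contraction of $A$ that vanishes wherever $D^2u=0$ and sees none of the components $h_{\alpha\beta\gamma}$ with distinct indices; it does not control $|A|^2$, even up to constants depending on $M$. Hence your Caccioppoli step yields at best $\iint(\hbox{the nonnegative quadratic form})\le C(M)$, and since the coefficients $3+\lambda_i^2+2\lambda_i\lambda_j$ are allowed to vanish under (ii), not even $\iint|A|^2\le C(M)$ follows. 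For the same reason the strengthened inequality $(\partial_t-\Delta_{g(t)})b\le -c(n)|\nabla_{g(t)}b|^2$ is not a consequence of (ii) alone (take $h_{iik}\neq 0$ for a single pair with $3+\lambda_i^2+2\lambda_i\lambda_k=0$ and $\lambda_i\neq0$); establishing any such coercive form requires the delicate completion of squares in the style of \cite{YW}, which you have not supplied. (b) Even granting $\iint_{Q_{7/8}}|A|^2\le C(M)$, a parabolic Moser iteration on Simons' inequality $(\partial_t-\Delta)|A|^2\le C|A|^4-2|\nabla A|^2$ does not close: $\varepsilon$-regularity arguments of Choi--Schoen/Ecker--Huisken type require the local curvature energy to be \emph{small}, not merely bounded by a large constant, and with $C(M)$ large the quartic term cannot be absorbed via the Michael--Simon Sobolev inequality; the gradient term $-c(n)|\nabla b|^2$ cannot rescue this since, as noted, it does not dominate $|A|^2$. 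This is precisely the step you defer to ``standard interior machinery,'' and it is where the proof actually lives; you should consult how \cite{TY} extract the pointwise conclusion from the subsolution $b$ before relying on this outline.
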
 
Here $Q_r(x, t)=B_r(x)\times[t-r^2, t] \subset \R^n\times(-\infty, 0]$, and $Q_r:=Q_r(0, 0)$.  We refer to \cite{TY} for further notations and definitions used in Theorem \ref{estimateTY}.  

 \begin{lem}\label{preserve1}
Suppose $u_0:\R^n \to \R$ is a $C^{1,1}$ function satisfying 
\begin{equation}\label{hesscond2}
- I_n\leq D^2u_0 \leq I_n
\end{equation}
and that $u(x, t)\in C^{\infty}(\R^{n}\times(0, T))\bigcap C^{0}(\R^{n}\times[0, T))$ is a solution to \eqref{PMA} and satisfies $u(x, 0)=u_0$.  Then \eqref{hesscond2} is preserved for all $t$. 
\end{lem}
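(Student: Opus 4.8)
The plan is to show that the two-sided bound $-I_n \le D^2 u \le I_n$ is preserved under the flow by a maximum-principle argument, being careful that the initial data is only $C^{1,1}$ so that $D^2 u_0$ need not be continuous. The strategy is to reduce to the interior of the time interval, where $u$ is smooth, and then run a tensor maximum principle on the evolution of $D^2 u$. First I would recall that the linearized operator associated to \eqref{PMA} is $\sum_{i,j} g^{ij}(D^2u)\,\partial^2_{ij}$, where $g^{ij}$ is the inverse of $I_n + (D^2u)^2$, and that differentiating \eqref{PMA} twice in space gives an evolution equation for the Hessian $h := D^2 u$ of the form $\partial_t h_{kl} = g^{ij} \partial^2_{ij} h_{kl} + (\text{lower-order terms involving } Dh)$, with no zeroth-order term in $h$. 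Equivalently one can track the eigenvalues $\lambda_i$: along the flow $\partial_t \lambda_i = g^{pq}\partial^2_{pq}\lambda_i + (\text{gradient terms})$, so that $\arctan \lambda_i$, and more usefully the quantities $1 \pm \lambda_i$, satisfy a parabolic inequality with no bad zeroth-order sign.

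The key steps, in order, are as follows. (1) Fix $T' < T$ and work on $\R^n \times [0,T']$. Since the bound to be preserved is two-sided and symmetric, it suffices by considering $\pm u$ to show that $D^2 u \le I_n$ is preserved. (2) Since $u$ is smooth on $\R^n\times(0,T)$ but the domain is noncompact, I would introduce a cutoff/barrier: consider $w_\e := u - \e(|x|^2 + 1 + Kt)$ for small $\e>0$ and suitable $K$, or more cleanly add $\e$ to the phase bound, to force any would-be maximum of $\lambda_{\max}(D^2u) - 1$ to be attained at an interior point; here one uses that $D^2 u$ is a priori bounded (indeed the hypotheses plus short-time theory control $\|D^2 u\|_{L^\infty}$ on $[0,T']$, and Theorem \ref{estimateTY} or Proposition \ref{shorttime} gives the needed smoothness and bounds for $t>0$). (3) At such an interior spatial maximum at time $t_0>0$, the maximum eigenvalue $\mu(x,t) := \lambda_{\max}(D^2u(x,t))$ is a viscosity supersolution of the linearized parabolic equation (one must handle the Lipschitz-but-not-$C^1$ nature of $\mu$ where eigenvalues collide, via the standard trick of testing against $\sup_{|\nu|=1}\langle D^2u\, \nu,\nu\rangle$ and using that the supremum of subsolutions is a subsolution, or by Hamilton's tensor maximum principle applied to $I_n - D^2 u \ge 0$). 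Since the evolution of $h_{kl}$ has no zeroth-order term, the null-eigenvector condition in Hamilton's maximum principle is satisfied trivially, so $I_n - D^2u \ge 0$ at $t=0$ implies it remains $\ge 0$. (4) Finally, handle $t=0$: because $u_0$ is only $C^{1,1}$, the bound $D^2 u_0 \le I_n$ holds a.e.; I would pass to the limit using that $D^2u(\cdot,t) \to D^2 u_0$ in an appropriate weak sense as $t\to 0^+$ (e.g. $Du(\cdot,t)\to Du_0$ locally uniformly, which follows from $u\in C^0(\R^n\times[0,T))$ together with the $L^\infty$ Hessian bound and interpolation), so that the a.e. bound at $t=0$ combined with continuity of $D^2u$ for $t>0$ and a short-time argument gives $D^2u(\cdot,t)\le (1+\e)I_n$ for small $t$, then let $\e\to 0$.

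The main obstacle I expect is step (4) together with the non-smoothness of the eigenvalue function in step (3): one cannot naively differentiate $\lambda_{\max}$, and one cannot naively apply the maximum principle "up to $t=0$" since $D^2u_0$ is merely bounded measurable, not continuous, so the usual comparison at the initial time is not immediate. The clean way around both issues is probably to prove the statement first for the smooth approximations $u^k_0$ from Lemma \ref{approx} (which satisfy $-I_n \le D^2 u^k_0 \le I_n$ and have bounded derivatives of all orders $\ge 2$, so Proposition \ref{shorttime} applies and the classical tensor maximum principle of Hamilton runs without regularity worries), obtaining $-I_n \le D^2 u^k(x,t) \le I_n$ on the common existence interval, and then pass to the limit $k\to\infty$ using uniqueness of the solution $u$ and stability of the flow (local $C^{1+\alpha}$ convergence of $u^k_0 \to u_0$ upgrading to local smooth convergence of $u^k \to u$ for $t>0$ by Theorem \ref{estimateTY} and bootstrapping). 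Passing a closed convex constraint on $D^2u$ through a locally uniform limit is immediate, which disposes of the limiting step cleanly.
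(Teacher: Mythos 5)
Your overall architecture --- prove the bound first for the smooth approximations $u_0^k$ of Lemma \ref{approx} via Proposition \ref{shorttime}, then pass to the limit using the interior estimates of Theorem \ref{estimateTY} and the uniqueness result of \cite{chen-pang} --- is exactly the paper's (the paper additionally rescales by factors $1-\delta_k$ to get strict bounds and to invoke the longtime result of \cite{CCH1} before making the estimates $\delta_k$-independent). The genuine gap is in step (3), where you actually prove preservation for the smooth data. Differentiating \eqref{PMA} twice gives, in a frame diagonalizing the Hessian,
\[
\Big(\sum_{i,j}g^{ij}\partial^2_{ij}-\partial_t\Big)u_{kk}=\sum_{l,m}g^{ll}g^{mm}(\lambda_l+\lambda_m)\,u_{lmk}^2 ,
\]
so the reaction term is not a harmless lower-order term: it is quadratic in $D^3u$ with coefficients $\lambda_l+\lambda_m$ that are indefinite precisely in the regime $-I_n\le D^2u\le I_n$ of this lemma. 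At a null eigenvector $V=e_n$ of $I_n-D^2u$ the first-order condition only kills the components $u_{nnk}$, leaving $\sum_{l,m\ne n}g^{ll}g^{mm}(\lambda_l+\lambda_m)u_{lmn}^2$, which can be negative when two eigenvalues are near $-1$. Hence the null-eigenvector condition in Hamilton's tensor maximum principle is \emph{not} ``satisfied trivially,'' and the argument does not close in these coordinates.

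The missing idea is the Lagrangian coordinate rotation \eqref{rotatedcoord}. Taking $\sigma=-\pi/4$ sends $\lambda_i\mapsto(1+\lambda_i)/(1-\lambda_i)$, so the hypothesis $(-1+\delta)I_n\le D^2u\le(1-\delta)I_n$ becomes $\tfrac{\delta}{2-\delta}I_n\le D^2v\le\tfrac{2-\delta}{\delta}I_n$ for the rotated potential $v$; now every coefficient $\lambda_l+\lambda_m$ is nonnegative, so by \eqref{preserved1} the quantity $V^TD^2v\,V$ for each \emph{fixed} direction $V$ is a subsolution of the linearized equation, and the scalar maximum principle (Theorem 9, p.~43 of \cite{F}, valid on all of $\R^n$ since $g^{ij}$ is uniformly elliptic and $D^2v$ bounded, so no barrier construction is needed) gives the upper bound; the lower bound is obtained symmetrically (e.g.\ by the opposite rotation, under which the potential becomes concave and the same quantity a supersolution). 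Rotating back yields the claim for the smooth approximations, after which your limiting step is fine. Without this rotation, or some equivalent device giving the quadratic gradient term a sign, your step (3) fails, and this is the heart of the lemma.
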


\begin{proof}
We begin by establishing the following special case

{\bf Claim:} If $u(x, t)$ is a smooth solution of \eqref{PMA} on $\R^{n}\times[0, T)$ satisfying \begin{enumerate}
\item [(i)] $\sup_{\R^n}|D^lu(x, t)|<\infty$ for every $t\in[0, T)$ and $l\geq 2,$
\item [(ii)] $u(x, 0)$ satisfies $\left(  -1+\delta\right)  I_n \leq D^{2}u\left( x,0\right)  \leq\left(1-\delta\right) I_n$ for some $\delta >0,$
\end{enumerate}
 then $u(x, t)$ satisfies $\left(-1+\delta\right)  I_n \leq D^{2}u\left( x,t\right)  \leq\left(1-\delta\right) I_n$ for each $t\in (0, T)$.

This was established in Lemma 4.1 in \cite{CCH1} and we provide a different proof of this here.  We begin by describing a change of coordinate which we will use at various places throughout the paper.  Let $z^j=x^j+\sqrt{-1}y^j$ and $w^j=r^j+\sqrt{-1}s^j$ ($j=1,...,n$) be two holomorphic coordinates on $\C^n$ related by \begin{equation}\label{rotatedcoord}z^j =e^{\sqrt{-1}\sigma} w^j\end{equation} for some constant $\sigma$.  Then as described in \cite{Y2} (see p.1356), if $L=\{(x, u_0(x)) | x\in \R^n\}$ in $\C^n$ is represented as $L=\{(r, v_0(r)) | r\in \R^n\}$ in the coordinates $w^j$, then $v_0$ satisfies 
\begin{equation}\label{rotatedlambda's} \arctan \lambda_i(D^2v_0) = \arctan \lambda_i(D^2u_0)-\sigma.\end{equation}
  Now by (ii) in the claim, as described in \cite{Y} we may choose $\sigma=-\pi/4$ and obtain such a new graphical representation of $L$ and the new potential function will satisfy
\begin{equation}\label{rotatedhesscond3}\frac{\delta}{2-\delta}I_n \leq D^{2}v_0  \leq\frac{2-\delta}{\delta}I_n. \end{equation} The claim will be established once we show \eqref{rotatedhesscond3}
is preserved for any $\delta>0$.  Differentiating \eqref{PMA} twice with respect to any coordinate direction $x_k$ yields 
\begin{equation}\label{preserved1}\displaystyle \sum_{i,j=1}^{n} g^{ij}\partial_{ij}v_{kk}-\partial_{t}v_{kk}= \sum_{l,m=1}^{n} g^{ll}g^{mm}\left(  \lambda_{l}+\lambda_{m}\right)  v_{lmk}^{2} \geq 0\end{equation} 
where the subscripts of $v$ denote partial differentiation.  Now fix any vector $V\in \R^n$ and any point $(r_0, t_0)$ note that $V^T D^2v(r_0, t_0) V=v_{VV}(r, t)$ where $v_{VV}(r, t)$ is just the second derivative of $v(r_0, t_0)$ in the direction $V$.  It follows from \eqref{preserved1} that the function $$f(r, t)=V^{T}\left(D^{2}v\left(r,t\right)  -\frac{2-\delta}{\delta}I\right)V$$ satisfies $$\left( \displaystyle \sum_{i,j=1}^{n} g^{ij}\partial^2_{ij}-\partial_{t}\right)f(r, t) \geq 0$$ at any $(r, t)$ in $\R^{n}\times[0, T)$.  Now note that by our assumption on the derivatives of $u$ we have that $g_{ij}(r, t)$ is uniformly equivalent to the Euclidean metric on $\R^n$ uniformly for $t \in [0,T_1]$ with $T_1 < T$, while $g_{ij}(r, t)$ and $f(r, t)$ are also continuous on $\R^{n}\times[0, T)$ the maximum principle (Theorem 9, p.43, \cite{F}) then implies $f(r, t)\leq 0$ for all $t$.  We can similarly prove that $f(r, t)\geq 0$ for all $t$.  This establishes  the claim.

 Now let $u_0$ and $u(x, t)$ be as in the lemma, and let $u_0^k$ be a sequence as in Lemma \ref{approx}.  Fix some sequence $\delta_k \to 0$ and consider the sequence $v_0^k = (1-\delta_k) u_0^k$.  Then by Proposition \ref{shorttime} there exists a positive sequence $T_k$ such that for each $k$ there is a smooth solution $v_k(x, t)$  of \eqref{PMA} on $\R^{n}\times[0, T_k)$ with initial condition $v_0^k$ and $\sup_{x\in\R^n}|D^l v^k(x, t)| <\infty$ for every $l \geq 2$ and $t\in[0, T_k)$.  For each $k$, assume that $T_k$ is the maximal time on which the solution $v_k$ exists.   By the above claim we also have $\left( -1+\delta_k\right)  I_n \leq D^{2}v_k\left( x,t\right)  \leq\left(1-\delta_k\right)I_n $ for each $t\in [0, T_k)$ and the main theorem in \cite{CCH1} then implies $\sup_{x\in \R^n}|D^l v_k(x,t)|^2 \leq C_{l, k}/t^{l-2} $ for all $l\geq3$, and some constant $C_{l, k}$ depending only on $l$ and $\delta_k$ and it follows that $T_k =\infty$.   In fact, the local estimates in Theorem \ref{estimateTY} can be used to remove the dependence on $\delta_k$ in these bounds.  Indeed, fix some $k$, $T\in(0,\infty)$ and $x'\in \R^n$ and let
\begin{equation}\label{rescaling}w_k(y, s)=\frac{1}{T}\left(v_k(y\sqrt{T}+x', sT+T)-v_k( x', T)-Dv_k( x', T)\cdot y\right).\end{equation}Then  we have $w_k(0, 0)=Dw_k(0,0)=0$, and $w_k(y, s)$ solves \eqref{PMA} on $\R^n\times[-1,0]$ and satisfies  $-(1-\delta_k)I_n \leq D^2 w_k \leq (1-\delta_k)I_n$  for all $(y, s)\in\R^n\times[-1,0]$. Applying Theorem \ref{estimateTY}  then gives
 \begin{equation}\label{estimate1}\sup_{(x,t)\in B_{\sqrt{T}/2}(x')\times[(3T/4), T]}\left|D^3 v_k(x, t)\right|^2=\sup_{(y,s)\in Q_{1/2}}\dfrac{1}{T}\left|D^ 3 w_k(y, s)\right|^2 \leq \dfrac{C}{T}
\end{equation}
where $B_{\sqrt{T}/2}(x')$ is the ball of radius $\sqrt{T}/2$ centered at $x'\in \R^n$ and $C$ is some constant independent of $k$.  Noting that $x'\in\R^n$ and $T\in(0,\infty)$ were arbitrary we obtain
 \begin{equation}\label{estimate2}\sup_{x\in \R^n}\left|D^3 v_k(x, t)\right|^2 \leq \dfrac{C}{t}
\end{equation}
for all $t\in(0, \infty)$  and it follows from a scaling argument, described in the proof of Lemma 5.2 in \cite{CCH1}, that for every $t\in(0, \infty)$ and $l\geq3$ we may have 
 \begin{equation}\label{estimate3}\sup_{x\in \R^n}\left|D^l v_k(x, t)\right|^2 \leq \dfrac{C_l}{t^{l-2}}
\end{equation}
for some constant $C_l$ depending only on $l$.
 
  From \eqref{estimate3} we conclude that the $v_k(x, t)$'s have a subsequence converging to a function $v(x, t)$ on $\R^n\times[0,  \infty)$ where the convergence is smooth on compact subsets of $\R^n\times(0,  \infty)$.  In particular,  by construction we have that $v(x, t)$ is smooth and solves \eqref{PMA} on $\R^n\times(0,  \infty)$, satisfies \eqref{hesscond2} for every $t\in [0, \infty)$ and $v(x, 0)=u_0(x)$.  Moreover, by \eqref{PMA} we have $|\partial_t v (x, t)|\leq \frac{n\pi}{2}$ for all $(x, t) \in\R^n\times[0,  \infty)$ from which we conclude that $v\in C^{0}(\R^{n}\times[0, T))$.
 
  It now follows by the uniqueness result in \cite{chen-pang} that $u(x, t)=v(x, t)$ for all $t\in [0, T)$, and thus $u(x, t)$ also satisfies \eqref{hesscond2} for every $t\in [0, T)$.  This completes the proof of the lemma.
 \end{proof}

 We now apply the above results to prove the following proposition. 

\begin{prop}\label{preserve2}
There exists a dimensional constant $\eta=\eta(n)>0$ such that for every $T>0$ the following holds: if $u(x, t)$ is a smooth solution to \eqref{PMA} on $\R^n\times[0, T)$ such that  $-(1+\eta)I \leq D^2u \leq (1+\eta)I$ at $t=0$ and $\sup_{x\in\R^n}|D^l u| <\infty$ for each $t\in [0, T)$ and $l\geq 2$, then $u(x, t)$ satisfies  $-\sqrt{3}I_n \leq D^2u \leq \sqrt{3}I_n$ for all $t\in[0, T)$.
\end{prop}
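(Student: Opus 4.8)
The plan is a continuity argument in time, fed by the interior estimates of Theorem~\ref{estimateTY}. Let $T^*$ be the supremum of those $\tau\in(0,T)$ for which $-\sqrt3\,I_n\le D^2u\le\sqrt3\,I_n$ on $\R^n\times[0,\tau]$. Since $-(1+\eta)I_n\le D^2u\le(1+\eta)I_n$ at $t=0$ with $1+\eta<\sqrt3$, and since near $t=0$ the solution coincides, by uniqueness, with the short-time solution of Proposition~\ref{shorttime} (whose Hessian is uniformly continuous on $\R^n\times[0,\varepsilon]$ for some $\varepsilon>0$), we have $T^*>0$; I must show $T^*=T$, so I assume for contradiction that $T^*<T$. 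The key elementary point is that on $\R^n\times[0,T^*)$ the bound $|\lambda_i|\le\sqrt3$ forces condition~(ii) of Theorem~\ref{estimateTY}, since there $3+\lambda_i^2+2\lambda_i\lambda_j\ge 3-3=0$ (with equality only if $\{\lambda_i,\lambda_j\}=\{\sqrt3,-\sqrt3\}$); for $n\le 3$ no condition is needed anyway.

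Thus Theorem~\ref{estimateTY} applies throughout $\R^n\times(0,T^*)$. Repeating the rescaling argument from the proof of Lemma~\ref{preserve1} — freezing a base point, passing to the rescaled solution \eqref{rescaling}, applying Theorem~\ref{estimateTY} on $Q_1$, and letting the base point and scale vary — I get $\sup_{\R^n}|D^3u(\cdot,t)|^2\le C/t$ and then, as in \eqref{estimate3}, $\sup_{\R^n}|D^lu(\cdot,t)|^2\le C_l/t^{\,l-2}$ for all $l\ge3$ and $t\in(0,T^*)$, with $C,C_l$ depending only on $n$ and $l$ (in particular on neither the solution nor $\eta$). Combined with $|\partial_tD^2u|\le C(\sup|D^2u|,\sup|D^3u|)$ and $|\partial_tu|\le n\pi/2$ from \eqref{PMA}, these bounds let me extend $D^2u(\cdot,t)$ continuously, and smoothly in $x$, up to $t=T^*$, where it still satisfies $-\sqrt3\,I_n\le D^2u(\cdot,T^*)\le\sqrt3\,I_n$; and by the definition of $T^*$ there is a sequence $x_k\in\R^n$ along which the largest eigenvalue of $D^2u(x_k,T^*)$, or of $-D^2u(x_k,T^*)$, tends to $\sqrt3$.

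Turning this into a contradiction is the hard part, and it is here that both the smallness of $\eta$ and Theorem~\ref{estimateTY} are essential. There is no direct maximum principle bounding the eigenvalues of $D^2u$: differentiating \eqref{PMA} twice in a direction $V$, as in \eqref{preserved1}, gives for $u_{VV}$ a right-hand side $\sum_{k,l}g^{kk}g^{ll}(\lambda_k+\lambda_l)u_{klV}^2$ that is not sign-definite once $D^2u$ has eigenvalues of both signs — this is the non-concavity of $\sum\arctan\lambda_i$ on $\{|\lambda_i|\le\sqrt3\}$ noted in the introduction. I expect the way around it to be a blow-up: translating the solution to the points $x_k$ and normalizing by affine functions (the equation and hypotheses being translation invariant), the uniform estimates above yield subconvergence, smoothly on $\R^n\times(0,T^*]$ and in $C^{1,\alpha}_{\mathrm{loc}}$ at $t=0$, to an entire solution $\bar u$ of \eqref{PMA} on $\R^n\times[0,T^*]$ with $-\sqrt3\,I_n\le D^2\bar u\le\sqrt3\,I_n$ everywhere, with $-(1+\eta)I_n\le D^2\bar u(\cdot,0)\le(1+\eta)I_n$, and for which an eigenvalue of $D^2\bar u$, or of $-D^2\bar u$, attains the value $\sqrt3$ at the interior spacetime point $(0,T^*)$ — exactly at the degenerate boundary of condition~(ii). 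One then argues that such a $\bar u$ must split off a quadratic factor in the corresponding eigendirection, and iterating forces $D^2\bar u$ to be constant; but then the eigenvalues of $D^2\bar u(0,T^*)=D^2\bar u(\cdot,0)$ lie in $[-(1+\eta),1+\eta]$, contradicting that one of them equals $\pm\sqrt3$. It is precisely this last strict inequality $1+\eta<\sqrt3$, and the need for (ii) to hold with room at $t=0$, that pin down $\eta$ as a small dimensional constant. The genuinely delicate point, which I expect to absorb most of the work, is the rigidity/splitting step: extracting from the evolution of the extremal eigenvalue, at the spacetime maximum where it equals $\sqrt3$, enough structure — using $\lambda_j\ge-\sqrt3=-\lambda_{\max}$ and the vanishing of $\nabla\lambda_{\max}$ there to tame the indefinite terms — to conclude constancy.
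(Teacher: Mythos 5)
There is a genuine gap, and it sits exactly where you flag it yourself: the ``rigidity/splitting'' step at an interior point where an eigenvalue of $D^2\bar u$ equals $\sqrt3$. You never prove it, and the mechanism you propose would not work as stated: the natural evolution inequality for an extremal eigenvalue (cf.\ the Warren--Yuan formula \eqref{lambda_n} used in Lemma \ref{l1}) contains coefficients of the form $1+\lambda_i\lambda_\alpha$, and at $\lambda_i=\sqrt3$ with $\lambda_\alpha$ possibly equal to $-\sqrt3$ these are negative ($1-3=-2$), so there is no sign to exploit; this is the same degeneracy as in condition (ii) of Theorem \ref{estimateTY}, and the vanishing of $\nabla\lambda_{\max}$ at the maximum does not tame the cubic terms $h_{\alpha\beta i}^2$. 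The paper's rigidity lemma (Lemma \ref{l1}) works only at $\lambda=\pm1$ under the bound $-I_n\le D^2v\le I_n$, precisely because there $1+\lambda_i\lambda_\alpha\ge0$. Your reading of where $\eta$ comes from is also off: $1+\eta<\sqrt3$ would allow $\eta$ up to $\sqrt3-1$, which is not the point; the smallness and non-explicitness of $\eta$ come from a compactness argument, not from this inequality.

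The paper's actual proof avoids the degenerate threshold entirely. It argues by contradiction with a sequence $\eta_k\to0$ of alleged counterexamples $u_k$, and introduces an \emph{intermediate} threshold $\sqrt{3/2}\in(1,\sqrt3)$: by continuity there is a first scale on which $-\sqrt3 I_n\le D^2u_k\le\sqrt3 I_n$ still holds but some eigenvalue reaches $\sqrt{3/2}$ at a point $(x_k,t_k)$. A parabolic rescaling places the initial time at $s=-1$ and this point at the origin; since $|\lambda_i|\le\sqrt3$ guarantees condition (ii) of Theorem \ref{estimateTY} along the sequence, one gets uniform higher-order bounds and a subsequential limit $v$ on $\R^n\times[-1,0]$ with $-I_n\le D^2v(\cdot,-1)\le I_n$ (here is where $\eta_k\to0$ is used) and $|\lambda_i(D^2v(0,0))|=\sqrt{3/2}$. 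This contradicts Lemma \ref{preserve1}, which says the bound $-I_n\le D^2v\le I_n$ is preserved. So the contradiction is obtained from the already-established preservation of the unit Hessian bound applied to a blow-up limit at the value $\sqrt{3/2}$, not from any rigidity statement at $\sqrt3$; if you want to salvage your continuity-in-time scheme you would need to import both of these devices (the $\eta_k\to0$ compactness and the intermediate threshold), at which point you have essentially reconstructed the paper's argument.
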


\begin{proof} Suppose otherwise.  Then there exists a sequence $\eta_k \to 0$ and a sequence of smooth $u_k(x, t)$ each solving \eqref{PMA} on $\R^n \times[0, T_k)$ where $T_k >0$, and each satisfying 
\begin{enumerate}
\item [(a)] $-(1+\eta_k)I_n \leq D^2u_k \leq (1+\eta_k)I_n$ at $t=0$,
\item [(b)] $\sup_{x\in\R^n}\left|D^l u_k \right| <\infty$ for each $t\in [0, T_k)$ and $l\geq 2$,
\item [(c)] $\left|\lambda_i(D^2 u_k(x_k, t_k))\right| > \displaystyle \sqrt{3}$  for some $(x_k, t_k) \in \R^n\times[0, T_k)$ and some $i$.
\end{enumerate}
Then by (a) and (b)  it is not hard to show that there exists a sequence  $R_k$ with $R^2_k \in (0, T_k)$ satisfying
\begin{enumerate}
\item [(A)] $-\sqrt{3}I_n \leq D^2u_k \leq \sqrt{3}I_n$ for all $t\in [0, R_k^2)$,
\item  [(B)] $\left|\lambda_i(D^2 u_k(x_k, t_k))\right| = \sqrt{3/2}$  for some $(x_k, t_k) \in \R^n\times[0, R_k^2)$ and some $i$.
\end{enumerate}
Now consider the sequence $$v_k(x, t):=\frac{1}{t_k^2} \left(u_k(x t_k + x_k, t_k^2 t + t_k^2)-u_k( x_k, t_k)-Du_k(  x_k, t_k^2)\cdot x\right)$$ each solving \eqref{PMA}  on $\R^n\times[-1, 0]$ and each satisfying
\begin{enumerate}
\item [(i)] $-(1+\eta_k)I_n \leq D^2v_k \leq (1+\eta_k)I_n$ at $t=0$ and $-\sqrt{3}I_n \leq D^2v_k \leq \sqrt{3}I_n$ $\forall t>0$,
\item [(ii)] $\displaystyle \left|\lambda_i (D^2 v_k)(0, 0)\right|=  \sqrt{3/2}$ for some $i$,
\item [(iii)] $v_k(0, 0)=Dv_k(0, 0)=0$.
\end{enumerate}
Then as assumption (ii) in Theorem \ref{estimateTY} is satisfied, we may apply the estimates there as in the proof of Lemma \ref{preserve1} to show that the $v_k(x, t)'s$ have a subsequence converging to a function $v(x, t)\in C^{\infty}(\R^n\times(-1, 0)) \bigcap  C^{0}(\R^n\times[-1, 0])$ such that in addition we have $\sup_{x\in\R^n} |D^3 v(x, t)|$ bounded independent of $t\in[-1/2, 0]$.  Moreover, by construction $v(x, t)$ solves \eqref{PMA} and satisfies $-I_n \leq D^2v(x, -1) \leq I_n$ in the $L_{\infty}$ sense and $|\lambda_i(D^2v(0,0))| =\sqrt{3/2}$ for some $i$.  Together, these facts contradict Lemma \ref{preserve1}.  
\end{proof}
\begin{rem} Noting that \eqref{TYe1} in Theorem \ref{estimateTY} holds in general when $n\leq3$, we observe that when $n\leq3$ we can replace $\sqrt{3}$ in Proposition \ref{preserve2} with any positive constant $C>0$.
\end{rem}

\section{Proof of Theorem \ref{entire}}

\begin{proof}[Proof of Theorem \ref{entire}]
 Let $u_0$ be as in Theorem \ref{entire} where $\eta>0$ is as in Proposition \ref{preserve2}.  Let $u^k_0$ be a sequence of approximations as in Lemma \ref{approx}.  By Proposition \ref{shorttime} we have smooth short time solutions $u_k(x, t)$ to \eqref{PMA} with initial condition $u_k(x, t)=u_0^k(x)$.  Moreover, by Proposition \ref{preserve2} we have $-\sqrt{3}I_n \leq D^2u \leq \sqrt{3}I_n$  for all $(x, t)$.  We will let $\R^n\times[0, T_k)$ be the maximal space time domain on which $u_k(x, t)$ is defined.  
  
   Then by a rescaling argument and applying Theorem \ref{estimateTY} as in the the proof of Lemma \ref{preserve1}, we can show that for each $k$, $T_k=\infty$ and $u_k(x, t)$ satisfies the estimates in \eqref{estimate3} for all $l\geq 3$ and $t>0$.  In particular, we argue as in the last two paragraphs of the proof of Lemma \ref{preserve1} that some subsequence of the $u_k(x, t)$'s converge to a function $u(x, t)$ solving \eqref{PMA} on $\R^n\times[0, \infty)$ satisfying (i) and (ii) in the conclusions of Theorem \ref{entire}. 
   
   We now show that $Du(x, t)$ satisfies conclusion (iii) in Theorem \ref{entire}.
By differentiating \eqref{PMA} once in space and using (i) and the estimates in (ii) for $l=3$ we may estimate as follows for any $x\in \R^n$ and $t>t' >0$:

\begin{equation}
\begin{split}
 \frac{\left|Du(x, t) -Du(x, t')\right|}{(t-t')^{1/2}}\leq &\frac{\left|\displaystyle\int_{t'}^{t} \partial_s Du(x, s) ds\right|}{(t-t')^{1/2}}\\
\leq &\frac{C\displaystyle\int_{t'}^{t}\left|D^3u(x, s)\right| ds }{(t-t')^{1/2}}\\
\leq &\frac{C\displaystyle\int_{t'}^{t}s^{-1/2} ds }{(t-t')^{1/2}}\\
\leq &C
\end{split}
\end{equation}
for some constant $C$ independent of $x$, $t$ and $t'$.  The uniqueness of $u(x,t)$ follows from the uniqueness result in \cite{chen-pang}. \end{proof}

 \section{Proofs of Theorem \ref{entire3} and Theorem \ref{entire4}}

We begin by establishing the following lemmas
\begin{lem}\label{l1}
Let $v(r, t)$ be a solution to \eqref{PMA} as in Theorem \ref{entire} and assume $-I_n\leq D^2 v(r, t)\leq I_n$ for all $r$ and $t$. Then if $\lambda_1(r', t')=1$ at some point where $t'>0$, then $\lambda_1(r, t)=1$ for all $(r,t) \in \R^n \times[0, t')$. Similarly if $\lambda_1(r', t')=-1$ at some point where $t'>0$, then $\lambda_1(r, t)=-1$ for all $(r,t) \in \R^n \times[0, t')$.
 \end{lem}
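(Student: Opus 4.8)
First reduce the two assertions to one: since $\lambda_i(-D^2v)=-\lambda_{n+1-i}(D^2v)$, the function $-v$ also solves \eqref{PMA}, and $-1\in\mathrm{spec}\,D^2v$ at a point precisely when $1\in\mathrm{spec}\,D^2(-v)$ there. So it suffices to prove: if $1\in\mathrm{spec}\,D^2v(r',t')$ with $t'>0$, then $1\in\mathrm{spec}\,D^2v(r,t)$ for all $(r,t)\in\R^n\times[0,t']$.

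The plan is to pass to the rotated coordinates \eqref{rotatedcoord} with $\sigma=\pi/4$, turning this into a statement about a convex solution. Where $D^2v>-I_n$, the matrix $I_n+D^2v$ is invertible, so $L$ is still a graph after the rotation, with potential $w$; by \eqref{rotatedlambda's}, $\arctan\lambda_i(D^2w)=\arctan\lambda_i(D^2v)-\frac{\pi}{4}\le 0$ since $D^2v\le I_n$, so $D^2w\le 0$. Hence $\tilde v:=-w$ is a \emph{convex} solution of \eqref{PMA} (again by the symmetry just used), its Hessian has eigenvalues $\frac{1-\lambda_i(D^2v)}{1+\lambda_i(D^2v)}$ and is locally bounded, and $1\in\mathrm{spec}\,D^2v(r,t)$ iff $\lambda_{\min}(D^2\tilde v(r,t))=0$. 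In particular $\lambda_{\min}(D^2\tilde v)$ vanishes at the interior point $(r',t')$, $t'>0$.

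It therefore suffices to show that the least eigenvalue of the Hessian of a convex solution of \eqref{PMA}, once it vanishes at an interior point, vanishes at all points at all earlier times. This is the parabolic strong maximum principle / microscopic convexity principle for the least eigenvalue, valid because the operator $\Theta(A)=\sum_i\arctan\lambda_i(A)$ is concave on the cone of non-negative symmetric matrices: differentiating \eqref{PMA} twice exactly as in \eqref{preserved1} gives, for each fixed unit vector $e$, a uniformly parabolic linear differential inequality (with the inverse metric $g^{ij}$ of $\tilde v$) for $e^{T}D^2\tilde v\,e$ on compact space--time subsets, and then the constant rank argument — in the spirit of the elliptic constant rank theorems of Caffarelli--Friedman and Korevaar--Lewis and their parabolic counterpart due to Bian--Guan — shows that $\mathrm{rank}\,D^2\tilde v(\cdot,t)$ is independent of the space variable and non-decreasing in $t$. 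Thus $\lambda_{\min}(D^2\tilde v(r',t'))=0$ forces $\lambda_{\min}(D^2\tilde v(r,t))=0$ for every $r$ and every $0\le t\le t'$; undoing the rotation yields the claim.

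The step requiring the most care — and the one I expect to be the main obstacle — is the passage to the convex picture on the ``vertical'' locus $\{\lambda_{\min}(D^2v)=-1\}$, where the $\pi/4$ rotation degenerates. Off that locus the above applies directly, and since $\{1\in\mathrm{spec}\,D^2v\}$ is closed the conclusion extends by continuity to its closure; if the locus has nonempty interior, an interior point has $\lambda_{\min}(D^2v)=-1$, so the second (already reduced) assertion applied to $-v$ gives $\lambda_{\min}(D^2v)\equiv-1$ at earlier times, after which one localizes to the orthogonal complement of the $(-1)$-eigenspace and runs the rotation there. One also checks uniform parabolicity of the linearization on compact space--time sets, which follows from the local boundedness of $D^2\tilde v$ noted above.
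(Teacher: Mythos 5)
Your reduction of the second assertion to the first via $-v$ matches the paper's, but the core of your argument has two genuine gaps. First, the $\pi/4$ rotation \eqref{rotatedcoord} is only legitimate where $I_n+D^2v$ is invertible, and the hypothesis here is merely $-I_n\le D^2v\le I_n$, so the locus $\{\lambda_{\min}(D^2v)=-1\}$ on which the rotated Lagrangian fails to be a graph may well be nonempty. Your proposed patch is essentially circular (you invoke ``the second, already reduced, assertion'' --- i.e.\ the very statement being proved --- to handle that locus), and it does not address the possibility that the degenerate set disconnects space--time, which would break the propagation from $(r',t')$ to an arbitrary $(r'',t'')$. Second, and more seriously, the rotation converts the hypothesis $\lambda_1(D^2v)=1$ (the largest eigenvalue attaining its interior \emph{maximum}) into $\lambda_{\min}(D^2\tilde v)=0$ (the smallest eigenvalue attaining its interior \emph{minimum}). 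The twice-differentiated equation \eqref{preserved1} gives, for a convex solution, $\bigl(\sum g^{ij}\partial^2_{ij}-\partial_t\bigr)\,e^{T}D^2\tilde v\,e\ge 0$, i.e.\ the \emph{sub}solution inequality, which propagates interior maxima and not minima; so the elementary strong maximum principle does not apply and you are forced onto a full parabolic constant-rank (Bian--Guan type) theorem. That is a much heavier tool whose structural hypotheses for the operator $\sum_i\arctan\lambda_i$ you do not verify, and even granting it, it would yield the conclusion only on the connected component where the rotation is valid.

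The paper avoids both problems by never rotating in this lemma: it works directly with $f=\sum_{i=1}^{k}\ln\sqrt{1+\lambda_i^2}$, summed over the full eigenvalue group of $1$ (so $f$ is smooth by Sylvester's result), and observes via the Warren--Yuan computation \eqref{lambda_n} that the terms $I$ and $I\!I$ in \eqref{222} are nonnegative precisely because all eigenvalues lie in $[-1,1]$. Thus $f$ is a subsolution of $\partial_t-\sum g^{ab}\partial^2_{ab}$ attaining its interior maximum $k\ln\sqrt2$ at $(r',t')$, and the strong maximum principle applies with the correct sign; repeated eigenvalues among $\lambda_1,\dots,\lambda_k$ are handled by the quadratic perturbation $v-\frac1m\sum_j jr_j^2$ rather than by a rotation. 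You should either redo the argument along these lines or supply a precise constant-rank theorem applicable to $\sum\arctan\lambda_i$ together with a non-circular treatment of the vertical locus.
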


\begin{proof}
  In \cite{YW}, the authors consider a solution $v$ to the elliptic equation corresponding to \eqref{PMA}:
\begin{equation}\label{EMA}
\displaystyle\sum_{i=1}^{n} \arctan \lambda_i = C\\
 \end{equation} 
where $C$ is some constant. By twice differentiating  \eqref{EMA} and the characteristic equation $\det (D^2v +\lambda_i I_n )=0$  they obtained a formula for $\sum_{a,b=1}^n g^{ab}\partial^2_{ab} \ln \sqrt{1+\lambda_i^2}$ at any point where $\lambda_i$ is a non-repeated eigenvalue for $D^2v$ (\cite{YW}; Lemma 2.1).  By essentially the same calculations we may differentiate the parabolic equation \eqref{PMA} and the characteristic equation $\det (D^2v +\lambda_i I_n )=0$ twice in space, and also differentiate the characteristic equation once in time, to obtain the exact same formula for $(\sum_{a,b=1}^n g^{ab}\partial^2_{ab}-\partial_t )\ln \sqrt{1+\lambda_i^2}$ at any point where $\lambda_i$ is a non-repeated eigenvalue for $D^2v$.  Namely, if $\lambda_i$ is a non-repeated eigenvalue of $D^2v$ at a point $(r_0, t_0)$ then the following holds at $(r_0, t_0)$ (after making a linear change of coordinates on $\R^n$ so that $D^2v(r_0, t_0)$ is diagonal): 

\begin{equation}\label{lambda_n}
\displaystyle
\begin{split} 
&\left(\sum_{a,b=1}^n g^{ab}\partial^2_{ab}-\partial_t \right)\ln \sqrt{1+\lambda_i^2}\\ 
&  =\left(  1+\lambda_{i}^{2}\right)  h_{iii}^{2}\\
&  +\sum_{\alpha\neq i}\frac{2\lambda_{i}}{\lambda_{i}-\lambda_{\alpha}}\left(  1+\lambda_{i}\lambda_{\alpha}\right)  h_{\alpha\alpha i}^{2}+\sum_{\alpha\neq i}\left[  1+\lambda_{i}^{2}+\frac{2\lambda_{i}}{\lambda_{i}-\lambda_{\alpha}}\left( 1+\lambda_{i}\lambda_{\alpha}\right)  \right]h_{ii\alpha}^{2}\\
&  +\sum_{\substack{\alpha<\beta\,\\\alpha,\beta\neq i}}2\lambda_{i}\left(\frac{1+\lambda_{i}\lambda_{\alpha}}{\lambda_{i}-\lambda_{\alpha}}+\frac{1+\lambda_{i}\lambda_{\beta}}{\lambda_{i}-\lambda_{\beta}}\right)h_{\alpha\beta i}^{2}
\end{split}
\end{equation}
where $h_{\alpha \beta \gamma}(r, t)$ is the second fundamental form of the embedding $F(r, t)=(r, Dv(r, t))$ of $\R^n$ to $\R^{2n}$.

 {\bf Claim:}  If $\lambda_1(r', t')=1$ at some point where $t'>0$, then $\lambda_1(r, t)=1$ for all $(r,t) \in \R^n \times(0, t']$.  

We will always assume that  $1\geq \lambda_1\geq \lambda_{2}\geq \cdot\cdot\cdot \geq \lambda_n \geq-1$ where the upper and lower bounds are given by Lemma \ref{preserve1}.   Now suppose that $1$ is an eigenvalue of multiplicity $k$ and consider the function  $f=\sum_{i=1}^k \ln \sqrt{1+\lambda_{ i}^2}$.  Then $f$ is a smooth function in a space-time neighborhood $U\times (t'-\epsilon, t'+\epsilon)$ of $(r',t')$ (see \cite{S}) and attains a maximum value in $U\times (t'-\epsilon, t'+\epsilon)$ at $(r',t')$.  Now we want to compute the evolution of $f$ in $U\times (t'-\epsilon, t'+\epsilon)$.  We illustrate how to do this first at some point where $\lambda_1,...,\lambda_k$ are all distinct.  In this case we may apply
 \eqref{lambda_n} separately to each term in $f$, and after some computation we obtain
\begin{equation}\label{111}
\begin{split}
\left(\sum_{a,b=1}^n  g^{ab}\partial^2_{ab}-\partial_t \right)& \sum_{i=1}^k \ln \sqrt{1+\lambda_{ i}^2}\\
&=\sum_{\gamma\leq k}\left(1+\lambda_{\gamma}^{2}\right)  h_{\gamma\gamma\gamma}^{2}+I+I\!I\\
&\geq I+I\!I
\end{split}
\end{equation}
where 
\begin{equation}\label{222}
\begin{split}
& 
\begin{array}
[c]{l}
\displaystyle
I=\sum_{\alpha<\gamma\leq k}\left(  3+\lambda_{\alpha}^{2}+2\lambda_{\alpha}\lambda_{\gamma}\right)  h_{\alpha\alpha\gamma}^{2}+\sum_{\alpha\leq k<\gamma}\frac{3\lambda_{\alpha}-\lambda_{\gamma}+\lambda_{\alpha}%
^{2}\left(  \lambda_{\alpha}+\lambda_{\gamma}\right)  }{\lambda_{\alpha}-\lambda_{\gamma}}h_{\alpha\alpha\gamma}^{2}\\
\displaystyle
\,\,\,\,\,\,+\sum_{\alpha<\gamma\leq k}\left(  3+\lambda_{\gamma}^{2}+2\lambda_{\gamma}\lambda_{\alpha}\right)  h_{\gamma\gamma\alpha}^{2}+\sum_{\alpha\leq k<\gamma}\frac{2\lambda_{\alpha}\left(  1+\lambda_{\alpha}\lambda_{\gamma}\right)  }{\lambda_{\alpha}-\lambda_{\gamma}}h_{\gamma \gamma\alpha}^{2}\vspace{20pt}
\end{array}
 \\
&
\begin{array}
[c]{l}
\displaystyle
I\!I=2\sum_{\alpha<\beta<\gamma\leq k}\left(  3+\lambda_{\alpha}\lambda_{\beta}+\lambda_{\beta}\lambda_{\gamma}+\lambda_{\gamma}\lambda_{\alpha}\right)  h_{\alpha\beta\gamma}^{2}\\
\displaystyle
\,\,\,\,\,\,\,\,+ \,2\sum_{\alpha<\beta\leq k<\gamma}\left[  1+\lambda_{\alpha}\lambda_{\beta}+\lambda_{\beta}\left(  \frac{1+\lambda_{\beta}\lambda_{\gamma} 
}{\lambda_{\beta}-\lambda_{\gamma}}\right)  +\lambda_{\alpha}\left(\frac{1+\lambda_{\alpha}\lambda_{\gamma}}{\lambda_{\alpha}%
-\lambda_{\gamma}}\right)  \right]  h_{\alpha\beta\gamma}^{2}\\
\displaystyle
\,\,\,\,\,\,\,\,+ \,2\sum_{\alpha\leq k<\beta<\gamma}\lambda_{\alpha}\left(  \frac{1+\lambda_{\alpha}\lambda_{\beta}}{\lambda_{\alpha}-\lambda_{\beta}
}+\frac{1+\lambda_{\alpha}\lambda_{\gamma}}{\lambda_{\alpha}
-\lambda_{\gamma}}\right)  h_{\alpha\beta\gamma}^{2}
\end{array}
  \\
\end{split}
\end{equation}
 where $I$ corresponds to summing the second and third term on the right hand side of \eqref{lambda_n} for $i=1,...,k$ and $I\!I$ corresponds to summing the fourth term on the right hand side of \eqref{lambda_n} for $i=1,...,k$.  Our derivation above only applies at a point where $\lambda_1,...,\lambda_k$ are all distinct, and thus cannot be used directly to calculate the evolution of $f=\sum_{i=1}^k \ln \sqrt{1+\lambda_{ i}^2}$ at $(r', t')$.  We now remove this assumption on the distinctness of eigenvalues by the approximation argument below.

  Consider the function $$v_{m}(r, t):=v(r, t)-\frac{1}{m}\sum_{j=1}^k j r_j^2.$$  Then for sufficiently large $m$, in some space-time neighborhood of $(r', t')$ which we still denote as $U\times (t'-\epsilon, t'+\epsilon)$ the eigenvalues $\lambda_{i, m}$ of $D^2 v_m$ will be between $-1$ and $1$ while the $k$ largest eigenvalues will be non-repeated.  Thus the function  $\ln \sqrt{1+\lambda_{i,m}^2}$ is smooth in $U\times (t'-\epsilon, t'+\epsilon)$ for each $i$.   On the other hand, by \eqref{PMA} and the definition $v_{m}$ we have
   \begin{equation}\label{PMAapprox}
\dfrac{\partial v_m}{\partial t} =\displaystyle\sum_{i=1}^{n} \arctan \lambda_{i,m}+w_m
 \end{equation} 
 where $$w_{m}=\displaystyle\sum_{i=1}^{n} \arctan \lambda_i(v)-\displaystyle\sum_{i=1}^{n} \arctan \lambda_i(v_m).$$ Note that $w_m$ approaches zero smoothly and uniformly on compact subsets of $U\times (t'-\epsilon, t'+\epsilon)$ as $m \to \infty$.
 By \eqref{PMAapprox},  the above referenced derivation of \eqref{lambda_n} and by \eqref{222} we have
 
\begin{equation}\label{333}
\begin{split}
\left(\sum_{a,b=1}^n  g_m^{ab}\partial^2_{ab}-\partial_t \right)& \sum_{i=1}^k \ln \sqrt{1+\lambda_{i,m}^2}\\
&=\sum_{\gamma\leq k}\left(1+\lambda_{\gamma,m}^{2}\right)  h_{\gamma\gamma\gamma}^{2}+I_m+I\!I_m\\
&+\sum_{i=1}^{k}\frac{\lambda_{i,m}}{1+\lambda_{i,m}^{2}}\left(
w_{m}\right)  _{ii}\\
& \geq\sum_{i=1}^{k}\frac{\lambda_{i,m}}{1+\lambda_{i,m}^{2}}\left(w_{m}\right)  _{ii}
\end{split}
\end{equation}
in $U\times (t'-\epsilon, t'+\epsilon)$ where $I_m$ is obtained by replacing $\lambda_{\alpha}$ and  $\lambda_{\beta}$ in $I$ by $\lambda_{\alpha,m}$ and $\lambda_{\beta,m}$ respectively, and $I\!I_m$ is obtained similarly. We have also used the fact that $I_m, I\!I_m$ is nonnegative.  Letting $m  \to \infty$, we conclude that $(\sum_{a,b=1}^n  g^{ab}\partial^2_{ab}-\partial_t ) f \geq 0$ on $U\times (t'-\epsilon, t'+\epsilon)$ and thus $f=k \ln \sqrt{2}$ in $U\times (t'-\epsilon, t']$ by the strong maximum principle (Theorem 1, p.34, \cite{F}).  

 Now for any $(r'', t'')\in\R^n\times[0, t']$ let $\gamma(s):[0, 1]$ be a line segment in space-time such that $\gamma(0)=(r',t')$ and $\gamma(1)=(r'',t'')$.  Let $A$ be the set of $\bar{s} \in[0, 1]$ for which $\lambda_1(\gamma(s))=1$ for all $s\in[0, \bar{s}]$.  Then the above argument shows that $A$ is in fact open and non-empty.  Moreover, $A$ is clearly closed by continuity and we then conclude that $A=[0, 1]$ and in particular,  $\lambda_1 (r'',t'')=1$.  This established the claim and thus the first statement in the conclusion of the lemma.

 By considering the solution $-v(r, t)$ to \eqref{PMA}, we likewise conclude the second statement in the concslusion of the lemma is true.\end{proof}

\begin{lem}\label{l2}
Let $v(r, t)$ be a solution to \eqref{PMA} as in Theorem \ref{entire} and assume that $-I_n<D^2 v(r, t)\leq I_n$ for all $r\in\R^n$ and $t\in[0, \infty)$. Then either $D^2 v(r, t)< I_n$ for all $r$ and $t>0$ or there exist coordinates $r_1,...,r_n$ on $\R^n$ in which we have $v(r, t)=\frac{r_1^2}{2}+\cdot\cdot\cdot+ \frac{r_k^2}{2}+w(r_{k+1},...,r_n,t)$ on $\R^n\times[0, \infty)$ where $-I_n<D^2 w(r, t)< I_n$ for all $r$, $t> 0$ and $k>1$.
 \end{lem}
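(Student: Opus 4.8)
The plan is to prove the dichotomy directly. If $D^2v(r,t)<I_n$ for every $r$ and every $t>0$ we are in the first case and done, so suppose instead that $\lambda_1(r',t')=1$ at some point with $t'>0$. Let $k\ge1$ be the largest multiplicity of the eigenvalue $1$ of $D^2v$ attained at any point of $\R^n\times(0,\infty)$; this is a well-defined integer in $\{1,\dots,n\}$ since the negation of the first case makes the relevant set of points nonempty. Fix $(r',t')$ with $t'>0$ where this multiplicity $k$ is attained. Running the argument in the proof of Lemma~\ref{l1} with this $k$, and using that the multiplicity is an upper semicontinuous, integer-valued function so that $\{\,\mathrm{mult}\ge k\,\}$ is both open and closed in the connected set $\R^n\times[0,t']$, one obtains $\lambda_1=\cdots=\lambda_k\equiv1$ on $\R^n\times[0,t']$; by maximality of $k$ the multiplicity is then exactly $k$ on $\R^n\times(0,t']$, in particular $-1<\lambda_j<1$ there for $j>k$ and the functions $\ln\sqrt{1+\lambda_i^2}$, $i\le k$, are smooth on that set.

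First I would extract the rigidity forced by the strong maximum principle step. Since $f:=\sum_{i=1}^k\ln\sqrt{1+\lambda_i^2}\equiv k\ln\sqrt2$ on $\R^n\times(0,t']$, the left-hand side of the evolution identity \eqref{111} (equivalently of \eqref{333} after letting $m\to\infty$) vanishes identically there, hence so do each of the manifestly nonnegative quantities $\sum_{\gamma\le k}(1+\lambda_\gamma^2)h_{\gamma\gamma\gamma}^2$, $I$ and $I\!I$ from \eqref{222}. Substituting $\lambda_i=1$ for $i\le k$ and $-1<\lambda_j<1$ for $j>k$ makes every coefficient occurring in $I$ and $I\!I$ strictly positive, so $h_{\alpha\beta\gamma}\equiv0$ on $\R^n\times(0,t']$ whenever at least one of $\alpha,\beta,\gamma$ is $\le k$; in other words the second fundamental form of $F(\cdot,t)$ has no component involving the $\lambda=1$ eigenspace.

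Next I would convert this curvature rigidity into a splitting of the potential. Fix $(r_0,t_0)\in\R^n\times(0,t']$ and choose coordinates in which $D^2v(r_0,t_0)$ is diagonal. Since the components of the second fundamental form are nonzero metric factors times the third spatial derivatives $v_{\alpha\beta\gamma}$, the previous step gives $v_{ijm}(r_0,t_0)=0$ for all $m$ whenever $i\le k$. By the standard formula for the derivative of a spectral projection of constant multiplicity, this is exactly the statement that $\nabla_r P=0$ at $(r_0,t_0)$, where $P(r,t)$ is the orthogonal projection onto the $\lambda=1$ eigenspace $E(r,t)$. As $(r_0,t_0)$ was arbitrary and $\R^n$ is connected, $E(r,t)=E(t)$ is independent of $r$ for each $t\in(0,t']$, and by continuity of $D^2v$ the rank-$k$ projection $P(r,t)$ extends continuously to $t=0$ with $P(r,0)=P(0)$ independent of $r$. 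Choosing orthonormal coordinates $r_1,\dots,r_n$ with $E(0)=\mathrm{span}\{e_1,\dots,e_k\}$ and absorbing a constant translation in $r_1,\dots,r_k$, we obtain $D^2v(r,0)=\mathrm{diag}(I_k,B(r,0))$ and hence $v(r,0)=\tfrac12(r_1^2+\cdots+r_k^2)+w_0(r_{k+1},\dots,r_n)$ with $-I_{n-k}<D^2w_0\le I_{n-k}$.

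Finally I would propagate the splitting to all times by uniqueness — since Lemma~\ref{l1} only propagates backwards in time, the forward direction has to come from elsewhere. Let $W(r_{k+1},\dots,r_n,t)$ be the solution of \eqref{PMA} in dimension $n-k$ with initial data $w_0$ supplied by Theorem~\ref{entire}, whose Hessian stays in $[-I_{n-k},I_{n-k}]$ by Lemma~\ref{preserve1}, and set $V(r,t):=\tfrac12(r_1^2+\cdots+r_k^2)+W(r_{k+1},\dots,r_n,t)+\tfrac{k\pi}{4}t$. A direct computation using $\arctan1=\tfrac\pi4$ shows $V$ solves \eqref{PMA} on $\R^n\times[0,\infty)$ and $V(\cdot,0)=v(\cdot,0)$, so the uniqueness result of \cite{chen-pang} forces $v\equiv V$. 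Setting $w:=W+\tfrac{k\pi}{4}t$ gives $v(r,t)=\tfrac12(r_1^2+\cdots+r_k^2)+w(r_{k+1},\dots,r_n,t)$ on $\R^n\times[0,\infty)$ with $D^2w=D^2W$; this lies in $(-I_{n-k},I_{n-k})$ for $t>0$, the lower bound being inherited from $D^2v>-I_n$ and the strict upper bound holding because $\lambda_1(D^2w)=1$ at a point with $t>0$ would force $D^2v$ to have the eigenvalue $1$ with multiplicity $\ge k+1$ there, contradicting the maximality of $k$. I expect the main obstacle to be the third paragraph: passing from the pointwise vanishing of the curvature components to the global constancy of the eigenspace $E$, and carrying this through to $t=0$, while keeping the bookkeeping on the multiplicity $k$ consistent.
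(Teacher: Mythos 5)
Your strategy is genuinely different from the paper's, and most of it can be made to work, but there are two real gaps in your third paragraph — the step you yourself flagged as the obstacle. First, you only prove that the eigenvalue-$1$ eigenspace $E(r,t)$ is independent of $r$ for each fixed $t\in(0,t']$; nothing in the argument shows $E(t)$ is independent of $t$, and without that you cannot choose one coordinate system $r_1,\dots,r_n$ in which the splitting holds on all of $\R^n\times(0,t']$. This is repairable: in coordinates adapted to a fixed $t_0$ you have $v_{abi}\equiv 0$ in $r$ for every $i\le k$, hence $\partial_i\Theta=\sum g^{ab}v_{abi}\equiv 0$ and so $\partial_t v_{ij}=\partial_{ij}\Theta=0$ for $i\le k$, which forces $\partial_t P=0$ — but this computation must be supplied, it is not a consequence of $\nabla_r P=0$ alone. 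Second, the passage to $t=0$ via ``continuity of $D^2v$'' is unjustified: the initial data is only $C^{1,1}$, so $D^2v$ need not be continuous at $t=0$. What is available is continuity of $v$ (and H\"older continuity of $Dv$) up to $t=0$; one should integrate the Hessian identities on $(0,t']$ to get $v(r,t)=\frac12\sum_{i\le k}r_i^2+\sum_{i\le k}a_i(t)r_i+\tilde w(r_{k+1},\dots,r_n,t)$ — the linear terms $a_i(t)$, which you omit, are controlled using the boundedness of $D^2v$ and of $\partial_t v=\Theta$ and then removed by a translation — and only then let $t\to 0^+$ using continuity of $v$ itself. With these repairs the argument closes; your final uniqueness step, comparing with $V=\frac12\sum r_i^2+W+\frac{k\pi}{4}t$, is exactly how the paper also propagates the splitting forward in time.

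For contrast, the paper's route is much lighter and never touches eigenvalues or the second fundamental form in this lemma. After diagonalizing $D^2v$ at the touching point it works with the smooth Hessian \emph{entry} $v_{11}$: a small rotation \eqref{rotatedcoord} puts the Hessian locally in the range $0\le D^2u\le M I_n$ with $u_{11}$ attaining the interior maximum $M$, so the strong maximum principle applies directly to \eqref{preserved1}, whose right-hand side is nonnegative in the convex range. Then $v_{11}\equiv 1$ is integrated twice in $r_1$, the cross and linear terms are excluded by boundedness of $D^2v$ and of $\partial_t v$, and the forward propagation is done by the same uniqueness argument you use, iterating once per split direction. Your approach buys a geometric explanation (vanishing of every curvature component in the degenerate directions, from the equality case of $I$ and $I\!I$), but at the cost of the spectral-projection bookkeeping and the regularity issues at $t=0$ noted above.
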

\begin{proof}
We begin by establishing the following claims.

{\bf Claim 1:}  If $v_{11}(r', t')=1$ at some point $(r', t')$ with $t'>0$, then $v_{11}=1$ on $\R^n\times[0, t']$.  

 By a rotation of coordinates on $\R^n$, we may assume that $D^2 v(r',t')$ is diagonal.  Since $D^2v>-I_n$ there exists some space time neighborhood  $U_1\times (t'-\epsilon, t'+\epsilon)$ of $(r', t')$ in which $-(1-\delta)I_n \leq D^2 v(r, t)\leq I_n$ for some $\epsilon, \delta>0$.  By \eqref{rotatedlambda's} it follows that for some choice of $\sigma\in (0, \pi/4)$, we may change coordinates on $\C^n$ (from $w^j$ to $z^j$) using \eqref{rotatedcoord} so that the local family of Lagrangian graphs $L=\{(r, D v(r, t)) | (r, t)\in U_1\times (t'-\epsilon, t'+\epsilon)\}$ is represented in the new coordinates as $L=\{(x, D u(x, t)) | (x, t)\in U_2\times (t'-\epsilon, t'+\epsilon)\}$ for some space time neighborhood $U_2\times (t'-\epsilon, t'+\epsilon)$ in which $0 \leq D^2 u(x, t)\leq M I_n$ with $u_{11}(x', t')=M$ at some interior point $(x', t')$ with respect to coordinates $x_1,...,x_n$ given by \eqref{rotatedcoord}.  It follows from \eqref{preserved1} and the strong maximum principle (Theorem 1, p.34, \cite{F}) that $u_{11}=M$ in $U_2\times[t'-\epsilon, t']$ and thus $v_{11}=1$ in $U_1\times[t'-\epsilon, t']$.

 Now for any $(r'', t'')\in\R^n\times[0, t']$ and let $\gamma(s)$, $s\in[0, 1]$, be a line segment in space-time such that $\gamma(0)=(r',t')$ and $\gamma(1)=(r'',t'')$.  Let $A$ be the set of $\bar{s} \in[0, 1]$ for which $v_{11}(\gamma(s))=1$ for all $s\in[0, \bar{s}]$.  Then the above argument shows that $A$ is in fact open and non-empty.  Moreover, $A$ is clearly closed by continuity and we then conclude that $A=[0, 1]$ and in particular,  $v_{11}(r'', t'')=1$.  This established the claim.

 {\bf Claim 2:} In Claim 1, we in fact have $v(r, t)=\frac{r_1^2}{2}+w(r_{2},...,r_n,t)$ on $\R^n\times[0, \infty)$.
 
  Integrating $v_{11}$ twice with respect to $r_1$ gives $$v(r, t)=\frac{r_1^2}{2}+r_1w_1(r_{2},...,r_n,t) + w_2(r_{2},...,r_n,t)$$ on $\R^n\times[0, t']$ for some functions $w_1$ and $w_2$.  It follows that $w_1$ must in fact be linear with respect to $x_2,...,x_n$ as otherwise $D^2 v$ would be unbounded on $\R^n\times[0, t')$ thus contradicting our assumption on that $-I_n<D^2 v(r, t)\leq I_n$ for all $r$ and $t$.  Our assumption that $D^2v(r',t')$ is diagonal then implies that $w_1$ must in fact be constant in space.  Finally, as the right hand side of \eqref{PMA} is uniformly bounded in absolute value from which we further conclude that is in fact constant in time as well and thus after a possible translation of the coordinate $r_1$ we have $$v(r, t)=\frac{r_1^2}{2}+w_3(r_{2},...,r_n,t)$$      on $\R^n\times[0, t']$ for some function $w_3$.  Now observe that up to the addition of a time dependent constant, $w_3(r_{2},...,r_n,t)$ solves \eqref{PMA} on $\R^{n-1}\times[0, t']$ and by Theorem \ref{entire} this extends to a smooth longtime solution which we still denote as $w_3(r_{2},...,r_n,t)$.  In particular $\frac{r_1^2}{2}+w_3(r_{2},...,r_n,t)$ is also a longtime 
solution to  \eqref{PMA} and  it follows from the uniqueness result in \cite{chen-pang} that the above representation of $v(r, t)$ holds on $\R^n\times[0, \infty)$.

 The lemma follows by iterating the arguments above starting with the function $w$ in Claim 2.\end{proof}

\begin{proof}[Proof of Theorem \ref{entire3}]
 Now let $u_0$ be a $C^{1,1}$ locally weakly convex function as in Theorem \ref{entire3}.  Using $\sigma=\pi/4$ in \eqref{rotatedcoord} to change coordinates on $\C^n$ and noting \eqref{rotatedlambda's} (see also \cite{Y}), we represent the Lagrangian graph $L=\{(x, u_0(x)) | x\in \R^n\}$ in the coordinates $z^j$ as $L=\{(r, v_0(r)) | r\in \R^n\}$ in the coordinates $w^j$ where $v_0$ satisfies $-I_n\leq D^2 v_0< I_n$.  Let $v(r, t)$ be the long time solution to \eqref{PMA} with initial condition $v_0$ given by Theorem \ref{entire}.
Then from Lemma 2.2 and Lemma  \ref{l1} we have $-I_n\leq D^2 v(r,t)< I_n$ for all $r$, $t\geq 0$.  Moreover, applying Lemma \ref{l2} to $-v(r, t)$ we further conclude that either $-I_n< D^2 v(r,t)< I_n$ $r$, $t> 0$ or \begin{equation}\label{splitting}v(r, t)=-\frac{r_1^2}{2}+\cdot\cdot\cdot- \frac{r_k^2}{2}+w(r_{k+1},...,r_n,t)\end{equation} on $\R^n\times[0, T)$ where $k>0$ and $-I_n<D^2 w(r, t)< I_n$ for all $r\in\R^n$, $t\geq 0$.  Let $L_t=\{(r, v(r, t)) | (r, t)\in \R^n\times[0, \infty)\}$  be the corresponding family of Lagrangian graphs in $\C^n$.  Then by \eqref{rotatedlambda's}, $L_t$ will correspond to a family of Lagrangian graphs $\{(x, Du(x, t)) | (x, t)\in \R^n\times[0, \infty)\}$ such that $u(x, t)$ is a longtime solution to \eqref{PMA} satisfying (i) in Theorem \ref{entire3}.  Now note that as $v(x, t)$ satisfies (ii) and (iii) in Theorem \ref{entire} it also satisfies (ii) and (iii) in Theorem \ref{entire3}.  It follows that $u(x, t)$ must then also satisfy (ii) and (iii) in Theorem \ref{entire3}.  The uniqueness of $u(x,t)$ follows from the uniqueness result in \cite{chen-pang}.  \end{proof}

 \begin{proof}[Proof of Theorem \ref{entire4}]
 Let $u_0$ be a locally $C^{1,1}$ function satisfying \eqref{supercritical}.  Then $u_0$ is automatically convex and by Theorem \ref{entire3} there exists a longtime convex solution  $u(x, t)$ to \eqref{PMA} with initial condition $u_0$.  In particular, note that $u(x,t)$ satisfies (ii) and (iii) in Theorem \ref{entire3}.   It will be convenient here to define the operator $$\Theta(A):=\displaystyle\sum_{i=1}^{n} \arctan \lambda_i (A)$$ on symmetric real $n\times n$ matrices $A$ where the $\lambda_i$'s are the eigenvalues of $A$.   A direct computation shows that as $u(x, t)$ solves \eqref{PMA}, $\Theta(D^2 u(x, t))$ evolves according to 
\begin{equation}\label{thetaevolve}
\partial_t  \Theta = \displaystyle \sum_{i,j=1}^{n}g^{ij} \partial^2 _{ij} \Theta.
\end{equation}
We would like to use \eqref{thetaevolve} and the maximum principle (Theorem 1, p.34, \cite{F}) to conclude that \eqref{supercritical} is thus preserved for all $t>0$.  One difficulty here is that $\Theta(D^2 u(x, t))$ is not necessarily continuous at $t=0$. Another difficulty is that   $D^2 u(x, t)$ is not neccesarily bounded above, and thus the symbol $g^{ij}$  is not necessarily bounded below (by a positive constant) on $\R^n$ for $t>0$.  To overcome this we will need to transform and approximate our solution $u(x,t)$ through the following sequence of steps.

{\bf Step 1 (small rotation):} We begin using \eqref{rotatedcoord}, with $\sigma=\sigma_0\in(0, \pi/2)$ to be chosen in a moment, to change coordinates on $\C^n$ and represent the Lagrangian graphs $L_t=\{(x, u(x,t)) | x\in \R^n\}$ in the coordinates $z^j$ as $L_t=\{(r, v(r,t)) | r\in \R^n\}$ in the coordinates $w^j$ for some family $v(r,t)$ with $(r,t)\in \R^n\times[0, \infty)$.  By \eqref{rotatedlambda's} we have \begin{equation}\label{sigmacritical}\Theta (D^2 v(r,0))  \geq (n-1)\frac{\pi}{2}-n\sigma_0\end{equation} and by \eqref{rotatedlambda's} and the convexity of $u(x,t)$ we have \begin{equation}\label{123}-K(\sigma_0) \leq D^2 v(r,t) \leq 1/K(\sigma_0)\end{equation} for all $r,t$ where $K(\sigma_0)\to 0$ as $\sigma_0 \to 0$.   
\vspace{12pt}

{\bf Step 2 (approximation):} Let $v^k_0$ be the sequence of approximations of $v_0=v(r,0)$ constructed in Lemma 2.1.  Then for each $k$ we have  $-K(\sigma_0) \leq D^2 v^k_0 \leq 1/K(\sigma_0)$ by \eqref{123}.  Moreover, $\sup_{r\in\R^n}|D^l v^k_0(r)| <\infty$ for all $l \geq 3$.   Now we show that   \begin{equation}\label{12345}\Theta (D^2 v^k_0)  \geq (n-1)\frac{\pi}{2}-n\sigma_0\end{equation} is satisfied for all $k$.

   Fix $r \in \R^n$ and $k$.  By \eqref{approxx} we have $$D^2 v^k_0(r)=\int_{\R^n} D^2 v_0(y)K\left(r, y, \frac{1}{k}\right)dy.$$ Approximating by the Riemann sums, we can find a double sequence $\{p_{ij}\} \subset \R^n$  and a sequence $\{j_i\} \subset {\mathbb Z}^+$ for which  \begin{equation}\label{approxint}D^2 v^k_0 (r)= \lim_{i\to\infty} \sum_{j=1}^{j_i} î  D^2 v_0(p_{ij})K\left(r, p_{ij}, \frac{1}{k}\right) \frac{1}{i^n}.\end{equation}  On the other hand, $$\int_{\R^n} K\left(r, y, \frac{1}{k}\right)dy=1$$ and we may then further assume $$B_i:=\sum_{j=1}^{j_i} K\left(r, p_{ij}, \frac{1}{k}\right) \frac{1}{i^n}\to 1$$ as $i\to \infty$.   By  \eqref{approxint} we then have 
\begin{equation}\label{approxint2}D^2 v^k_0 (r)= \lim_{i\to\infty} \sum_{j=1}^{j_i} î  D^2 v_0(p_{ij})A_{ij} \end{equation}
where $A_{ij}= K\left(r, p_{ij}, \frac{1}{k}\right) / (i^n B_i)$ and in particular $\sum_{j=1}^{j_i} A_{ij} =1$ while $A_{ij} \geq 0$ for all $i,j$.  Now since $(n-1)\frac{\pi}{2}-n\sigma_0 > (n-2)\frac{\pi}{2}$ by our choice of $\sigma_0$, the results in \cite{Y2} assert that the set of symmetric $n\times n$ matrices $A$ for which $\Theta \geq  (n-1)\frac{\pi}{2}-n\sigma_0$ is a convex set $S$ in the space of real $n\times n$ symmetric matrices.  This, \eqref{approxint2} and the fact that $D^2 v_0(p_{ij}) \in S$ for all $i, j$ imply $D^2 v^k_0 (r)  \in S$.  Thus \eqref{12345} holds for each $k$.

{\bf Step 3 ($\pi/4$ rotation):} Now we use \eqref{rotatedcoord} as in Step 1, but with $\sigma=\pi/4$, to obtain from $v(r, t)$ and the $v^k_0(r)$'s  a corresponding family $w(p,t)$ and sequence  $w^k_0(p)$.  In particular, $w(p,t)$ is a longtime solution to \eqref{PMA} and  the $w^k_0$'s will satisfy \eqref{hesscond} in Theorem \ref{entire}, provided $\sigma_0>0$ is chosen sufficiently small and we  will assume such a choice of $\sigma_0$ has been made.  They will also satisfy \begin{equation}\label{123456}\Theta (D^2 w^k_0(p))  \geq (n-1)\frac{\pi}{2}-n\sigma_0-n\frac{\pi}{4}\end{equation} by  \eqref{rotatedlambda's}. Thus for each $k$,  Theorem \ref{entire} gives a longtime solution $w^k(p,t)$ to \eqref{PMA} with initial condition $w^k_0$ satisfying  $\sup_{r\in\R^n}|D^l w^k(p,t)| <\infty$ for all $l \geq 2$ and $t\geq 0$.  It follows from \eqref{123456}, \eqref{thetaevolve}, \eqref{rotatedlambda's} and the weak maximum principle (Theorem 9, p.43, \cite{F}) that \begin{equation}\label{1234}\Theta (D^2 w^k(p,t))  \geq (n-1)\frac{\pi}{2}-n\sigma_0-n\frac{\pi}{4}\end{equation} for all $(p,t)$.  Now using Theorem \ref{entire} and arguing as in  the beginning of the proof of Theorem \ref{entire}, we see that some subsequence of the $w^k(p,t)$'s converge smoothly and uniformly on compact subsets of $\R^n\times(0, \infty)$ to a smooth limit solution to \eqref{PMA} on  $\R^n\times(0, \infty)$.  By the uniqueness result in \cite{chen-pang} and the definition of $w^k_0$, we see this limit solution is in fact the solution $w(p,t)$.  In particular, $w(p,t)$ must satisfy \eqref{1234} for all $(p,t)$. 


Rotating back to the original coordinates, we conclude from the last statement above that $u(x,t)$ must satisfy \eqref{supercritical} for all $t\geq 0$.  Thus either \eqref{supercritical} holds with strict inequality for all $t>0$ or there exists some $(x',t')\in\R^n\times(0, \infty)$ at which equality holds in \eqref{supercritical} in which case \eqref{thetaevolve} and the strong maximum principle (Theorem 1, p.34, \cite{F}) give
\begin{equation}\label{sigmacritical2}\Theta (D^2 u(x,t)) =(n-1)\frac{\pi}{2}\end{equation}in $\R^n\times (0, t']$.
In this case, integrating \eqref{PMA} in $t$ and noting the continuity of $u(x,t)$ in $t$ (for all $t\geq0$) we obtain $$u(x, t)=u(x,t')+(n-1)\frac{\pi}{2}(t-t')$$ for all $t\in[0, t']$, and thus for all $t\in[0, \infty)$ by the uniqueness result in \cite{chen-pang}.  In particular, $D^2 u(x,t)$ satisfies \eqref{sigmacritical2} for all $t\geq 0$.  On the other hand, $u(x,t')$ is smooth in $x$ and it follows that $u_0(x)=u(x,0)$ is a smooth convex solution to the special Lagrangian equation $\Theta (D^2 u_0(x))=  (n-1)\frac{\pi}{2}$ on $\R^n$ and is thus quadratic by the Bernstein theorem in \cite{Y}.  This concludes the proof of Theorem \ref{entire4}.\end{proof}

\bibliographystyle{amsplain}

\begin{thebibliography}{10}
\bibitem{CCH1} Chau, A., Chen, J., He, W., {\sl Lagrangian Mean Curvature flow for entire Lipschitz graphs}, to appear in Calc. Var. Partial Differential Equations.
\bibitem{chen-pang} Chen, J., Pang, C., {\sl Uniqueness of viscosity solutions of a geometric fully nonlinear parabolic equation}, C.R. Math. Acad. Soc. Paris, Ser. 347 (2009), no. 17-18, 1031-1034.
\bibitem{F} Friedman, A., {\sl Partial Differential Equations of Parabolic Type}, Krieger Pub Co.
\bibitem{HE2}Ecker, K., Huisken, G., {\sl Mean curvature evolution of entire graphs}, Ann. of Math. (2) 130 (1989), no. 3, 453-471. 
\bibitem{HE} Ecker, K., Huisken, G., {\sl Interior estimates for hypersurfaces moving by mean curvature}, Inventiones Mathematicae 105 (1991), no. 3, 547-569.
\bibitem{Kr} Krylov, N.V.,  {\sl Lectures on Elliptic and Parabolic Equations in H\"{o}lder Spaces}, Graduate Studies in Mathematics Volume 12, American Mathematical Society.
\bibitem{Si} Simon, L., {\sl Asymptotics for a class of non-linear evolution equations, with applications to geometric problems}, 
 Ann.  Math. 118, (1983), no. 3, 525-571.
\bibitem{Sm} Smoczyk, K., {\sl Longtime existence of the Lagrangian mean curvature flow},  Calc. Var. Partial Differential Equations  (2004), 20, 25-46.
\bibitem{SmMt} Smoczyk, K., Wang, M.,  {\sl Mean curvature flows of Lagrangian submanifolds with convex potentials}, Journal of Diff. Geom.  (2002), 62,  243-257.
\bibitem{S} Sylvester, J., {\sl On the differentiability of \begin{bf}O\end{bf}(n) invariant functions of symmetric matrices }, Duke 
\bibitem{Y} Yuan, Y., {\sl A Bernstein problem for special Lagrangian equations}, Invent. Math. (2002), 150, 117-125.
\bibitem{Y2} Yuan, Y., {\sl Global solutions to special Lagrangian equations}, Proc. Amer. Math. Soc., 134 (2006), 1355-1358. 
\bibitem{TY} Nguyen, T.A., Yuan Y., {\sl A priori estimates for Lagrangian mean curvature flows}, Int. Math. Res. Not. 2010, Art. ID rnq242, 8 pp.doi:10.1093/imrn/rnq242.
\bibitem{YW} Warren, M., Yuan, Y., {\sl Hessian estimates for the sigma-2 equation in dimension three}, Comm. Pure Appl. Math., 62 (2009), 305--321.


\end{thebibliography}

\end{document}